\newcommand{\seqnum}[1]{\href{http://oeis.org/#1}{\underline{#1}}}
\definecolor{webgreen}{rgb}{0,.5,0}\definecolor{webbrown}{rgb}{.6,0,0}
\begin{document}

\theoremstyle{plain}
\newtheorem{theorem}{Theorem}
\newtheorem*{theorem*}{Theorem}
\newtheorem{corollary}[theorem]{Corollary}
\newtheorem{lemma}[theorem]{Lemma}
\newtheorem{proposition}[theorem]{Proposition}
\theoremstyle{definition}
\newtheorem{definition}[theorem]{Definition}
\newtheorem{example}[theorem]{Example}
\newtheorem{conjecture}[theorem]{Conjecture}
\theoremstyle{remark}
\newtheorem{remark}[theorem]{Remark}
\newtheorem{case}{Case}

\def\glqq{,\,\!\!,}
\def\grqq{`\,\!`}
\def\eg{{\it e.g.},\,}
\def\Eg{{\it E.g.},\,}
\def\ie{{\it i.e.},\,}
\def\Ie{{\it I.e.},\,}
\def\viz{{\it viz}\ }
\def\etc{\,{\it etc.}\,}
\def\via{{\it via}\, }
\def\sspp{\,+\,}
\def\sspm{\,-\,}
\def\sspeq{\,=\,}
\def\sspeqmust{\buildrel! \over =}
\def\sspdef{\, :=\,}
\def\sspneq{\,\neq\,}
\def\sspkl{\,<\,}
\def\sspgr{\,>\,}
\def\sspgeq{\,\geq\, }
\def\sspleq{\,\leq\,} 
\def\spgeq{\,\geq\, }
\def\spleq{\,\leq\,} 
\def\pn{\par\noindent}
\def\pb{\par\bigskip}
\def\ps{\par\smallskip}
\def\pbn{\par\bigskip\noindent}
\def\psn{\par\smallskip\noindent}
\def\sspentsp{\, \hat =\, }
\def\sspequiv{\,\equiv\,}
\def\sspnotequiv{\,\not\equiv  \,}
\def\sspmapsto{\,\mapsto\,}
\def\sspFollows{\,\Rightarrow\,}
\def\sspdiv{\,|\,}
\def\Beq{\begin{equation}}
\def\Eeq{\end{equation}}
\def\Beqarray{\begin{eqnarray}}
\def\Eeqarray{\end{eqnarray}}
\def\sspin{\,\in\,}
\def\sspfed{\,=:\,}
\def\sspto{\,\to\,}
\def\rhs{right-hand side\,}
\def\lhs{left-hand side\,}
\def\dstyle#1{$\displaystyle #1 $}
\def\union{\cup}
\def\floor#1{\left\lfloor{#1}\right\rfloor}
\def\ceil#1{\left\lceil{#1}\right\rceil}
\def\abs#1{\vert {\,#1\,} \vert}
\def\Cases2#1#2#3#4{\left\{\begin{array}{ll}#1&\mbox{#2}\\ &\\#3&\mbox{#4}\end{array}\right.}
\def\binomial#1#2{{#1} \choose {#2}}
\def\sspdiv{\,|\,} 
\def\rprod{\prod\llap {\raise 8pt\hbox{$\rightarrow \thinspace$}}} 
\def\range#1#2{#1,\,\count15=#1 \advance\count15 by +1 \number\count15,\,...,\,#2\,}
\def\rangeinf#1{#1,\, \count16=#1 \advance\count16 by +1 \number\count16,\,...\,}
\font\smallcmr=cmr6
\def\cProd{\rlap{$\>\,\!${\raise2pt\hbox{\smallcmr c}}}\Pi}
\def\vjA{\overrightarrow{j_A}}
\def\vjANp{\overrightarrow{j_A}(N,p)}
\def\vjC{\overrightarrow{j_C}}
\def\vjCNp{\overrightarrow{j_C}(N,p)}
\def\TWord{TW\ \llap{$o$}rd}
\def\Tseq{T\,\llap{$s$}eq}
\def\Nseq{N\,\llap{$s$}eq}
\def\fseq{f\,\llap{$s$}eq}
\rightline{revised version, September 21 2020}
\vbox {\vspace{6mm}}
\begin{center}
\vskip 1cm{\LARGE\bf 
The Tribonacci and $\bf ABC$ Representations of Numbers are Equivalent 
}
\vskip 1cm
\large
Wolfdieter L a n g \footnote{ \url{http://www.itp.kit.edu/~wl}} \\
Karlsruhe \\
Germany\\
\href{mailto:wolfdieter.lang@partner.kit.edu}{\tt wolfdieter.lang@partner.kit.edu}
\end{center}

\vskip .2 in
\begin{abstract}
It is shown that the unique representation of positive integers in terms of tribonacci numbers and the unique representation in terms of iterated $A$, $B$ and $C$ sequences defined from the tribonacci word are equivalent. Two auxiliary representations are introduced to prove this bijection. It will be established directly on a node and edge labeled tribonacci tree as well as formally. A systematic study of the $A$, $B$ and $C$ sequences in terms of the tribonacci word is also presented.
\end{abstract}

\bigskip

\section{Introduction}
The quintessence of many applications of the tribonacci sequence $T\sspeq\{T(n)\}_{n=0}^{\infty}$ \cite{OEIS} \seqnum{A000073} \cite{Weisstein1}, \cite{Wiki1}, \cite{Carlitz} \cite{Barcucci} is the ternary substitution sequence $2\sspto 0$,  $1\sspto 02$ and  $0\sspto 01$. Starting with $2$ this generates an infinite (incomplete) binary tree with ternary node labels called $TTree$. See {\sl Fig 1} for the first $6$ levels $l\sspeq \range{0}{5}$ denoted by $TTree_5$. The number of nodes on level $l$ is the tribonacci number $T(l+2)$, for $l\sspgeq 0$. In the limit $n\sspto \infty$ the last level $l \sspeq n$ of $TTree_n$ becomes the infinite self-similar tribonacci word $\TWord$. The nodes on level $l$ are numbered by $N\sspeq \range{0}{T(l+2)\sspm 1}$.
\psn
The left subtree, starting with $0$ at level $l\sspeq 1$ will be denoted by $TTreeL$, and the right subtree, starting with $2$ at level $l\sspeq 0$ is named $TTreeR$.  The node $0$ at level $l\sspeq 1$ belongs to both subtrees. The number of nodes on level $l$ of the left subtree $TTreeL$ is $T(l+1)$, for $l\sspgeq 1$; the number of nodes on level $l$ of $TTreeR$ is $1$ for $l\sspeq 0$ and $l\sspeq 1$ and $T(l+2)\sspm T(l+1)$, for $l\sspgeq 2$.
\psn
$\TWord$ considered as ternary sequence $t$ is given in \cite{OEIS} \seqnum{A080843} (we omit the OEIS reference henceforth if $A$ numbers for sequences are given): $\{0,\, 1,\,0,\,2,\,0,\,1,\,0,\,0,\,1,\,0,\,2,\,0,\,1,\,...\}$, starting with with $t(0)\sspeq 0$.  See also {\it Table 1}. This is the analogue of the binary rabbit sequence \seqnum{A005614} in the {\sl Fibonacci} case. Like in the {\sl Fibonacci} case with the complementary and disjoint {\sl Wythoff} sequences $A\sspeq$ \seqnum{A000201}  and $B\sspeq$ \seqnum{A001950} recording the positions of $1$ and $0$, respectively, in the tribonacci case the sequences $A\sspeq$ \seqnum{A278040}, $B\sspeq$ \seqnum{A278039}, and $C\sspeq$ \seqnum{A278041} record the positions of $1$, $0$, and $2$, respectively. These sequences start with $A \sspeq \{1,\,5,\,8,\,12,\,14,\,18,\,21,\,25,\, 29,\, 32,\, ...\}$, $B \sspeq \{0,\,2,\,4,\,6,\,7,\,9,\,11,\,13,\, 15,\,17,\, ...\}$, and $C \sspeq \{3,\,10,\,16,\,23,\,27,\,34,\,40,\,47,\, 54,\,60,\, ...\}$. The offset of all sequences is $0$. See also {\it Table 1}. 
\psn
The present work is a generalization of the theorem given in the {\sl Fibonacci} case for the equivalence of the {\sl Zeckendorf} and {\sl Wythoff} representations of numbers in \cite{WLang}.
\psn 
Note that there are other complementary and disjoint tribonacci $A$, $B$ and $C$ sequences given in OEIS. They use the same ternary sequence $t \sspeq$ \seqnum{A080843} (which has offset 0), with $0 \sspto a$, $1\sspto b$ and $2\sspto c$, however with offset 1, and record the positions of $a$, $b$ and $c$ by A = A003144, B = A003145 and C = A003146, respectively. In \cite{Carlitz} and \cite{Barcucci} they are called $a$, $b$, and $c$. This tribonacci $ABC$-representation is given in \seqnum{A317206}. The relation between these sequences (we call them now $a$, $b$, $c$) and the present one is: $a(n) \sspeq  B(n-1) \sspp 1$, $b(n)\sspeq A(n-1) \sspp 1$, and $c(n) \sspeq C(n-1) + 1$, for $n \sspgeq 1$. We used $B(0) = 0$ in analogy to the {\sl Wythoff} representation in the {\sl Fibonacci} case. The ternary triboonacci sequence appears also with offset $1
$ and entries $1,\,2$ and $3$ as \seqnum{A092782}.  
\psn
From the uniqueness of the ternary sequence $t$ it is clear that the three sequences $A$, $B$ and $C$ cover the nonnegative integers $\mathbb N_0$  completely, and they are disjoint. In contrast to the {\sl Fibonacci} case where the {\sl Wythoff} sequences are {\sl Beatty} sequences \cite{Wiki2} for the irrational number $\varphi\sspeq$\seqnum{A001622}, the golden section, and are given by $A(n)\sspeq \floor{n\,\varphi}$ and $B(n)\sspeq \floor{n\,\varphi^2}$, for $n\sspin \mathbb N$ (with $A(0) \sspeq 0 \sspeq B(0)$), no such formulae for the complementary sequences $A$, $B$ and $C$ in the tribonacci case are considered. The definition given above in terms of $\TWord$, or as sequence $t$, is not burdened by numerical precision problems.\psn
Note that the irrational tribonacci constant $\tau\sspeq 1.83928675521416...\sspeq$\seqnum{A058265}, the real solution of characteristic cubic equation of the tribonacci recurrence $\lambda^3 \sspm \lambda^2\sspm \lambda \sspm 1\sspeq 0$, defines, together with \dstyle{\sigma\sspeq \frac{\tau}{\tau\sspm 1}\sspeq 2.19148788395311... \sspeq}\seqnum{A316711} the complementary and disjoint {\sl Beatty} sequences $At \sspdef \floor{n\,\tau}$ and $Bt \sspdef \floor{n\,\sigma}$, given in \seqnum{A158919 } and \seqnum{A316712}, respectively.
\psn
\parbox{16cm}{\begin{center}
{\includegraphics[height=7cm,width=1\linewidth]{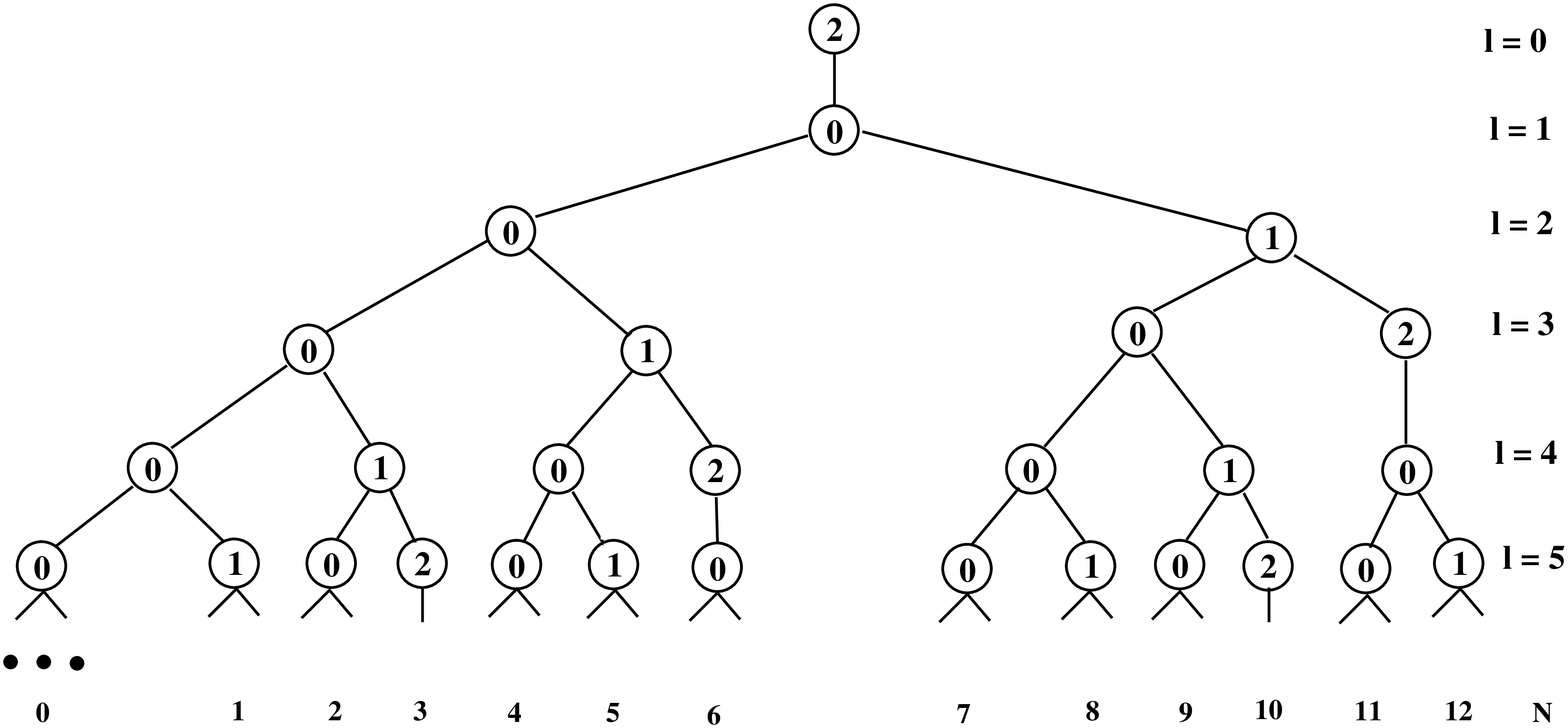}}
\end{center}
}
\psn
\hskip 5cm {\bf  Figure 1: Tribonacci Tree $\bf TTree_5$} 
\pbn
The analogue of the unique {\sl Zeckendorf} representation of positive integers is the unique tribonacci representation of these numbers. 
\Beq \label{NT}
(N)_T \sspeq \sum_{i=0}^{I(N)}\,f_i(N)\,T(i+3) , \ \ f_i(N)\sspin \{0,\,1\},\ \ f_i(N)\,f_{i+1}(N)\, f_{i+2}(N)\sspeq 0,\,\, f(N)_{I(N)}\sspeq 1\,.
\Eeq
The sum should be ordered with falling $T$ indices. This representation will also be denoted by ($Z$ is used as a reminder of {\sl Zeckendorf})
\Beqarray \label{ZTN}
ZT(N)&\sspeq& \cProd_{i=0}^{I(N)}\,f(N)_{I(N)-i} \nonumber \\ 
&\sspeq& f(N)_{I(N)}f(N)_{I(N)-1}...f(N)_{0}\, . 
\Eeqarray
The product with concatenation of symbols is here denoted by $\cProd$, and the concatenation symbol $\circ$ is not written. This product has to be read from the right to the left with increasing index $i$. This representation is given in \seqnum{A278038}$(N)$, for $N\sspgeq 1$. See also {\sl Table 2} for $ZT(N)$  for $N\sspeq \range{1}{100}$.\psn
\Eg $(1)_T\sspeq T(3),\, ZT(1)\sspeq 1$; $(8)_T\sspeq T(6)\sspp T(3),\, ZT(8)\sspeq 1001$. The length of $ZT(N)$ is $\#ZT(N)\sspeq I(N)\sspp 1\sspeq \{1,\,2,\,2,\,3,\, 3,\,3,\,4,\,4,\,4,\,4,\,4,\,4,\,...\}\sspeq \{$\seqnum{A278044}$(N)\}_{N\sspgeq 1}$. The number of occurrences of each $n\sspin \mathbb N$ in this sequence is given by $\{1,\,2,\,3,\,6,\, 11,\, 20,\, 37\,...\}\sspeq \{$\seqnum{A001590}$(n+2)\}_{n\sspgeq 1}$. These are the companion tribonacci numbers of $T\sspeq$\seqnum{A000073} with inputs $0,\,1,\,0$ for $n\sspeq 0,\,1,\,2$, respectively.
\psn
$ZT(N)$ can be read off any finite $TTree_n$ with $T(n+2)\sspgeq N$ after all node labels $2$ have been replaced by $1$. See {\sl Figure 1} for $n=5$ (with $2\sspto 1$) and numbers $N\sspeq \range{1}{12}$. The branch for $N$ is read from bottom to top, recording the labels of the nodes, ending with the last $1$ label. Then the obtained binary string is reversed in order to obtain the one for $ZT(N)$. \Eg $N\sspeq 9$ leads to the string $0101$ which after reversion becomes $ZT(9)\sspeq 1010$.\psn
The analogue of the {\sl Wythoff} $AB$ representation of nonnegative integers in the {\sl Fibonacci} case is the tribonacci $ABC$ representation using iterations of the sequences $A$, $B$ and $C$. 
\Beq \label{NABC}
(N)_{ABC}\sspeq \left(\cProd_{j=1}^{J(N)}\, X(N)_j^{k(N)_j}\right)B(0),\ \text{with} \ \ N\sspin \mathbb N_0, \ \ k(N)_j\sspin \mathbb N_0\,, 
\Eeq
again with an ordered concatenation product. Here $X(N)_j\sspin \{A,\,B,\,C\}$, for \pn
$j\sspeq \range{1}{J(N)-1}$, with  $X(N)_j \sspneq X(N)_{j+1}$, and $X(N)_{J(N)} \sspin \{A,\,C\}$. Powers of $X(N)_j$ are also to be read as concatenations. Concatenation means here iteration of the sequences. The exponents can be collected in $\vec k(N)\sspdef (k(N)_1,\, ...,\,k(N)_{J(N)})$.
For the equivalence proof only positive integers $N$ are considered. If exponents vanish the corresponding $A,\,B,\,C$ symbols are not present ($X(N)_j^0$ is of course not $1$).  If all exponents vanish, the product $\cProd$ is empty, and $N\sspeq 0$ could be represented by $(0)_{ABC} \sspeq B(0) = 0$ (but this will not be used for the equivalence proof). Each $ABC$ representation ends in a single $B$ acting on $0$ because the argument of $X\sspin \{A,B,C\}$ for $N\sspeq 1, 2$ and $3$ is $0$ written as $B(0)$, and for other $N$ the iterative tracking of the arguments always leads back to one of these three numbers. \Eg for $N\sspeq 15$ these arguments are $8$ (for $X \sspeq B$), $2$ (for $A$) and $1$ (for $A$) and $1\sspeq B(0)$, hence $15 \sspeq B(A(B(A(B(0)))))$, later abbreviated as $01010$ (see {it Table 3}).  \psn
\Eg $(30)_{ABC} \sspeq (BCBA)B(0)\sspeq B(C(B(A(B(0)))))$, $J(30) \sspeq 4$,\ $\vec k(30)\sspdef (k_1,\,k_2,\,k_3,\,k_4)$ $\sspeq (1,\,1,\,1,\,1)$,  $X(30)_1^{k_1} \sspeq B^1,\ X(30)_2^{k_2} = C^1, X(30)_3^{k_3} \sspeq B^1,\ X(30)_4^{k_4} = A^1$ (sometimes the arguments $(N)$ are skipped).\psn
The number of $A,\, B$ and $C$ sequences present in this representation of $N$ is $\sum_{j=1}^{J(N)}k(N)_j \sspp 1 \sspeq$\seqnum{A316714}$(N)$, This representation is also written as
\Beq \label{NABC012}
ABC(N)\sspeq \left(\cProd_{j=1}^{J(N)}\, x(N)_j^{k(N)_j}\right)0, \text{with} \ \ k(N)_j\sspin \mathbb N_0\,, 
\Eeq
and $x(N)_j\sspin \{0,\,1,\,2\}$, for $j\sspeq \range{1}{J(N)-1}$, with $x(N)_j\sspneq x(N)_{j+1}$, and $x(N)_{J(N)}\sspin \{1,\,2\}$. Here $x \sspeq 0,\,1,\,2$ replaces $X\sspeq B,\,A,\,C$, respectively. 
\psn
\Eg $ ABC(0) \sspeq 0,\ J(0)\sspeq 0$ (empty product); $ABC(30)\sspeq 02010$. \psn  
For this $ABC$ representation see \seqnum{A319195}. Another version is \seqnum{A316713} (where for a technical reason $B,\, A$, and  $C$ are represented by $1,\,2$ and $3$ (not $0,\,1$ and $2$), respectively). See also {\sl Table 3} for $ABC(N)$, for $N\sspeq \range{1}{100}$.\psn
The number of $B$s, $A$s and $C$s in the $ABC$ representation of $N$ is given in sequences \seqnum{A316715}, \seqnum{A316716} and \seqnum{A316717}, respectively. As already mentioned the length of this representation is given in \seqnum{A3167174}.
\pbn
\section{Equivalence of representations}
The unique representation $(N)_T$ of $N\sspin \mathbb N$ in terms of tribonacci numbers given in eq.~\ref{NT}, or equivalently as a representation as binary word $ZT(N)$ in eq.~\ref{ZTN}, and the unique representation $N_{ABC}$ as composition of $A,\,B,\,C$ sequences given in eq.~\ref{NABC} will be shown to be equivalent, \ie they can be transformed into each other without computing the actual value of $N$. This will be done in two directions with the help of two auxiliary representations denoted by $\widehat{ZT}(N)$ and $(N)_{AB\bullet\boldsymbol\times}$. See the {\it Figure 2} for the pictogram of the transformations (mappings) between these representations in the anti-clockwise and clockwise directions. The uniqueness of the two representations $ZT(N)$ and $(N)_{ABC}$ will be proved in {\it section 3}. \psn
\psn \parbox{16cm}{\begin{center}
{\includegraphics[height=7cm,width=.6\linewidth]{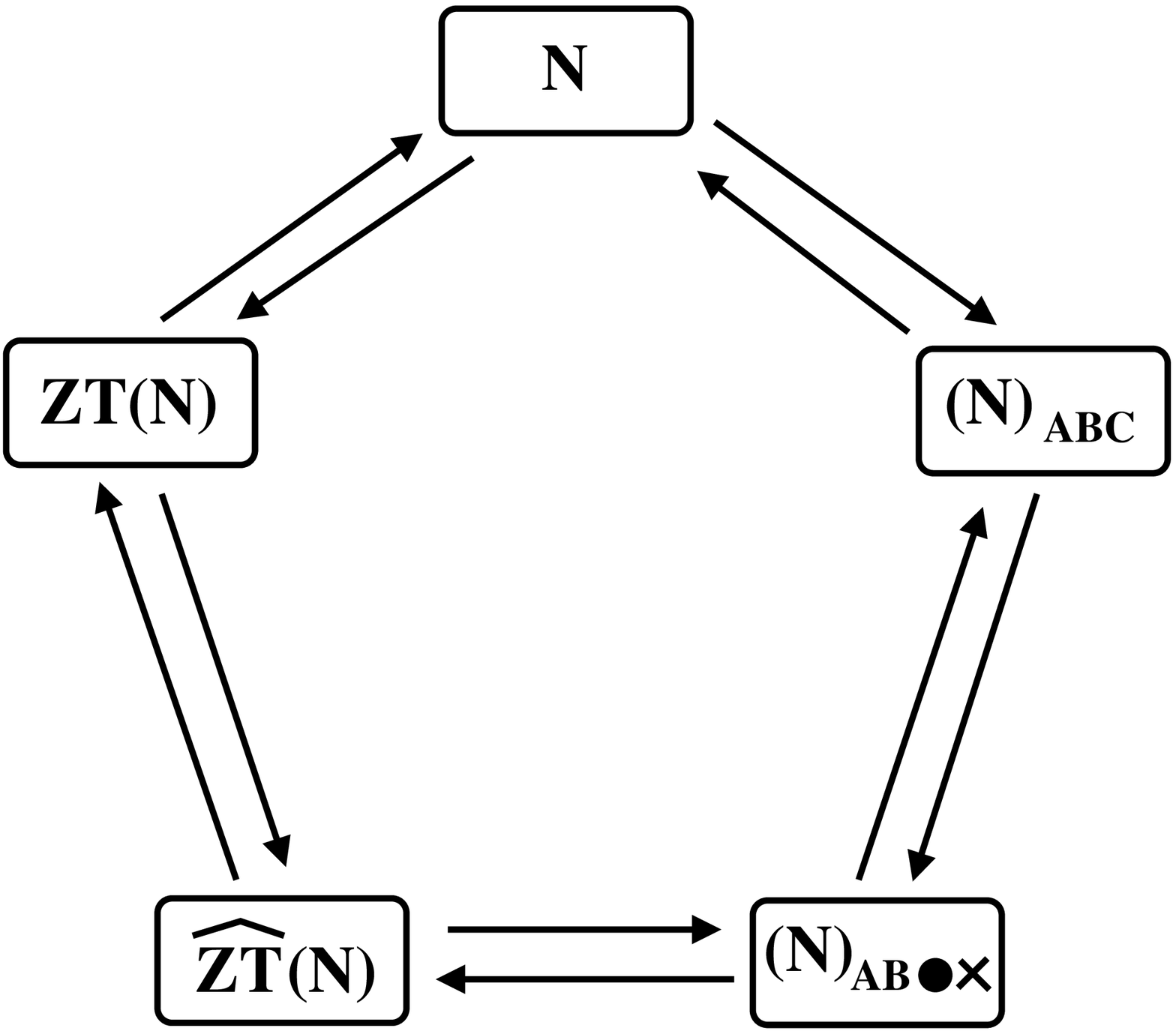}}
\end{center}
}
\psn
\hskip 3cm {\bf  Figure 2: Equivalence of representations} 
\pbn
{\bf I) From $\bf ZT(N)$ to $\bf (N)_{ABC}$}
\pbn
{\bf A) From $\bf ZT(N)$ to $\bf \widehat{ZT}(N)$}
\begin{definition} \label{mapA}
The binary word $ZT(N)$ of length \seqnum{A27804}$(N)$, for $N\sspin \mathbb N$, is mapped to  
a binary word $\widehat{ZT}(N)\sspdef 0\overline{ZT(N)}0$ of length \seqnum{A27804}$(N) \sspp 2$, where $\overline{ZT(N)}$ is the reversed word $ZT(N)$.
\end{definition}
\pn
\Eg $\widehat{ZT}(1)\sspeq 010$ from $ZT(1) = 1$; $\widehat{ZT}(30)\sspeq 00110010$ from $ZT(30) \sspeq 100110$.
\psn
{\bf B) From $\bf\widehat{ZT}(N)$ to $\bf (N)_{AB\bullet\boldsymbol\times}$}
The following six substitution rules for all entries of $\widehat{ZT}(N)$ except the last one are used in the following.
\begin{definition} \label{Subsrules} {\bf The six substitution rules}
\Beqarray
(S{\bf\underline{0}}1)\ \ \ \ \ &\phantom{x}{\bf \underline{0}}0\phantom{x\ }:& \ {\bf \underline{0}}\,\longrightarrow\, B \,, \nonumber \\
(S{\bf\underline{0}}2)\ \ \ \ \ &\phantom{xx}{\bf \underline{0}}11\phantom{x\ }:& \ {\bf \underline{0}}\,\longrightarrow\, \boldsymbol{\times}\,, \nonumber\\
(S{\bf\underline{0}}3)\ \ \ \ \ &\phantom{xx}{\bf \underline{0}}10\phantom{x\ }:& \ {\bf \underline{0}}\,\longrightarrow\, A\,, \nonumber\\
(S{\bf\underline{1}}1)\ \ \ \ \ &\phantom{xx}{\bf \underline{1}}1\phantom{x\ }\ \ :& \ {\bf \underline{1}}\,\longrightarrow\, \boldsymbol{\times}\,, \nonumber\\
(S{\bf\underline{1}}2)\ \ \ \ \ &\phantom{xx}{\bf \underline{1}}0x\phantom{x\ }:& \ {\bf \underline{1}}\,\longrightarrow\, \bullet\,, \ \text{for}\, x\sspin \{0,1\}, \nonumber\\
(S{\bf\underline{1}}3)\ \ \ \ \ &\phantom{xx}{\bf \underline{1}}0\emptyset\phantom{x\ }:& \ {\bf \underline{1}}\,\longrightarrow\, B\,. \nonumber
\Eeqarray
\end{definition}
\pn
The boldface and underlined $0$ and $1$ entries of $\widehat{ZT}(N)$ are substituted depending on which entries follow. The empty word symbol $\emptyset$ marks the end of this binary word. The last entry $0$ of $\widehat{ZT}(N)$ is not substituted (it disappears), therefore the resulting  $(N)_{AB\bullet\boldsymbol\times}$, a word over the alphabet of the four letters $A,\, B,\, \bullet,\,\boldsymbol{\times}$, ends always in a single $B$.  
\psn
\Eg $\widehat{ZT}(N)\sspeq 0101010011010$ with $\#\widehat{ZT}(N)\sspeq 13$ translates to $(N)_{AB\bullet\boldsymbol\times}\sspeq A{\bullet}A{\bullet}A{\bullet}B{\boldsymbol \times}{\boldsymbol \times}{\bullet}AB$ with length $\#(N)_{AB\bullet\boldsymbol\times}\sspeq 12$. 
The value of the number $N$ is here not relevant.
\psn
These substitution rules suffice and are not in conflict which each other. 
In $\widehat{ZT}(N)$ no three consecutive $1$s appear and the end is always $10$.
All possible entries of $\widehat{ZT}(N)$, except the last $0$, can be substituted.\pn
\Eg $1\underline{\bold 1}0$ is not needed, because if this is not the end of $\widehat{ZT}(N)$ the substitution applies to $1\underline{\bold 1}00$ or $1\underline{\bold 1}01$, \ie $(S\underline{\bf 1}2)$ in both cases. If it appears at the end substitution $(S\underline{\bf 1}3)$ applies which produces $\boldsymbol\times B$, because the leading $1$ had to be substituted by $(S\underline{\bf 1}1)$. 
\psn
A list of the $20$ triplets, called tribons, which can appear in $(N)_{AB\bullet\boldsymbol\times}$ is given in the next lemma. The multiplicity of each tribon is shown by giving the corresponding underlined triplets of $\widehat{ZT}(N)$ and their following two entries (if also $\emptyset$ is used to signal the end). To distinguish these quintets the $1$ which in the tree $TTree$ (see {\it Figure 1}) is actually a $2$ is given in boldface and red. A vertical bar signals the end of a  $(N)_{AB\bullet\boldsymbol\times}$ representation. \psn
\begin{lemma} \label{20tribons} {\bf The $\bf 20$ tribons}
\pn
First member $A$ (three tribons, multiplicities $1,\,3,\,1$):
\Beq
A\bullet A\,:\ \ \underline{010}10\,; \ \ A\bullet B\,:\ \ \underline{010}00,\ \ \underline{010}01,\ \ \underline{010}0{\color{red}{\bold 1}}\,; \ \ A\bullet \boldsymbol{\times}\,: \underline{010}{\color{red}{\bold 1}} \nonumber\\
\Eeq
First member $B$ (six tribons, multiplicities $3,\,1,\,1,\,3,\,1,\,1$):\pn
\Beqarray
BBB\,:& \underline{000}00, \ \  \underline{000}01,\ \  \underline{000}0{\color{red}{\bold 1}}\,; &
BBA\,: \ \ \underline{000}10\,;\ \  BAB|\,: \ \ \underline{001}0\emptyset\,,\nonumber \\
BA\bullet\,:& \underline{001}00, \ \  \underline{001}01,\ \  \underline{001}0{\color{red}{\bold 1}}\,; & BB\boldsymbol{\times}: \ \ \underline{000}{\color{red}{\bold 1}}1\,; \ \ B\boldsymbol{\times\times}\,:\ \ \underline{00{\color{red}{\bold 1}}}10\,.\nonumber
\Eeqarray
First member $\bullet$ (six tribons, multiplicities $3,\,1,\,1\,3,\,1,\,1$):\pn
\Beqarray
\bullet A\bullet\,:& \underline{101}00, \ \  \underline{101}01,\ \  \underline{101}0{\color{red}{\bold 1}}\,; &
\bullet AB|\,: \ \ \underline{101}0\emptyset\,;\ \  \bullet BA\,: \ \ \underline{100}10\,,\nonumber \\
\bullet BB\,:& \underline{100}00, \ \  \underline{100}01,\ \  \underline{100}0{\color{red}{\bold 1}}\,; & \bullet B\boldsymbol{\times}: \ \ \underline{100}{\color{red}{\bold 1}}1\,; \ \ \bullet\boldsymbol{\times\times}\,:\ \ \underline{10{\color{red}{\bold 1}}}10\,.\nonumber
\Eeqarray
First member $\boldsymbol{\times}$ (five tribons, multiplicities $1,\,3,\,1,\,3,\,1$):\pn
\Beqarray
\boldsymbol{\times} \bullet A\,: && \ \underline{{\color{red}{\bold 1}}10}10\,; \ \ \boldsymbol{\times}\bullet B\,: \ \ \underline{{\color{red}{\bold 1}}10}00,\ \  \underline{{\color{red}{\bold 1}}10}01,\ \  \underline{{\color{red}{\bold 1}}10}0{\color{red}{\bold 1}}\,; \nonumber \\
\boldsymbol{\times}\bullet\boldsymbol{\times}\,: &&\ \underline{{\color{red}{\bold 1}}10} {\color{red}{\bold 1}}1\,;\ \ \boldsymbol{\times\times}\bullet\,\ : \ \ \underline{0{\color{red}{\bold 1}}1}00\,,
\ \ \underline{0{\color{red}{\bold 1}}1}01,\, \  \underline{0{\color{red}{\bold 1}}1}0{\color{red}{\bold 1}}\,; \nonumber \\
\boldsymbol{\times\times}B|\,: &&\ \, \underline{0{\color{red}{\bold 1}}1}0\emptyset\,. \nonumber
\Eeqarray
\end{lemma}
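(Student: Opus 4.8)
This is a finite local check, and the plan is to reduce it to a bounded enumeration and then verify the cases. First I would observe that each rule of Definition~\ref{Subsrules} replaces a single entry of $\widehat{ZT}(N)$ by reading that entry together with at most its next two entries (with $\emptyset$ allowed as one of them). Consequently the three letters forming a tribon at positions $i,i+1,i+2$ depend only on the five entries $i,\dots,i+4$ of $\widehat{ZT}(N)$ --- the quintet of the lemma --- where the trailing entries are replaced by $\emptyset$ for a tribon that reaches the end of the word. So it suffices to list all admissible quintets and apply the rules to their first three entries.

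Second I would fix which colored quintets are admissible. By eq.~\ref{NT} the word $ZT(N)$ has leading digit $1$ and no block $111$, so $\widehat{ZT}(N)=0\,\overline{ZT(N)}\,0$ begins with $0$, ends with $10$, and has no block $111$. The extra ingredient is the meaning of the boldface red ($2$-origin) entries, which I would read off the tree substitution $0\sspto01$, $1\sspto02$, $2\sspto0$: a node labeled $2$ is always the right child of a node labeled $1$ and has a single child labeled $0$, so along a root-to-node branch a $2$ sits in the context $1\,2\,0$. After the replacement $2\sspto1$ and the reversal building $\widehat{ZT}(N)$ this context becomes $0\,1\,1$. Hence in $\widehat{ZT}(N)$ a $1$ is $2$-origin if and only if it is immediately followed by another $1$; every block $11$ is read as a $2$-origin $1$ followed by an ordinary $1$, and no $111$ occurs. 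In particular the color of every quintet entry except the last is forced, while the color of the last entry merely records whether the word continues with one more $1$.

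Third I would run the enumeration, organized by the first letter of the tribon, that is by the rule firing at position $i$: a leading $A$ forces the prefix $010$; a leading $B$ forces the prefix $00$; a leading $\bullet$ forces $10$ followed by a nonempty entry; and a leading $\boldsymbol{\times}$ forces either current entry $0$ with continuation $11$, or current entry $1$ followed by $1$. For each such prefix I would list the admissible second and third entries with exactly the look-ahead the two following rules require, read off the second and third letters, and collect the tribon. The multiplicity of a tribon is then the number of admissible colored quintets giving it: when the three letters are already determined by fewer than five entries, the leftover trailing entry is a don't-care that may independently be $0$, an ordinary $1$, or a $2$-origin $1$, which produces the multiplicity $3$ (e.g.\ $BBB$ from $\underline{000}00$, $\underline{000}01$, $\underline{000}0\mathbf 1$); when the last entry is pinned down one gets multiplicity $1$ (e.g.\ $BBA$ from $\underline{000}10$, whose final $0$ is needed to fire $(S\underline{\mathbf 0}3)$ rather than producing $\boldsymbol{\times}$).

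The enumeration itself is routine; the real work lies in the admissibility analysis and the bookkeeping. I expect the three delicate points to be: (a) establishing the $2$-origin coloring rule and using it to decide, for each quintet, which $11$-blocks are legal and how a trailing $1$ may be colored --- this is exactly what separates binary-identical quintets such as $\underline{000}01$ and $\underline{000}0\mathbf 1$; (b) the terminal tribons $BAB|$, $\bullet AB|$ and $\boldsymbol{\times\times}B|$, which rely on the end cases $10\emptyset$ and rule $(S\underline{\mathbf 1}3)$ and must be kept distinct from their interior analogues; and (c) completeness and non-redundancy --- that every admissible quintet yields one of the listed tribons and that each tribon is produced by exactly the displayed quintets. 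Pushing (a)--(c) through the four first-letter groups yields precisely the $3+6+6+5=20$ tribons with the stated multiplicities.
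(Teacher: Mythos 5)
Your proposal is correct and follows essentially the same route as the paper, which states the lemma without a separate written proof: the displayed quintets are themselves the exhaustive local check under the six substitution rules of Definition~\ref{Subsrules}, combined with the structural facts that $\widehat{ZT}(N)$ begins with $0$, ends in $10$, contains no block $111$, and that a $1$ is of $2$-origin exactly when it is immediately followed by another $1$. Your reduction to colored quintets (locality of the rules), the derivation of the coloring rule from the tree substitution, the separate handling of the terminal $\emptyset$ cases, and the don't-care count yielding the multiplicities $1$ and $3$ make explicit precisely the enumeration the paper intends.
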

\begin{corollary} \label{Coro4}
The possible $11$ doublets in  $(N)_{AB\bullet\boldsymbol\times}$ are:\pn
\Beq
A\bullet,\, AB|,\; BB,\,BA,\,B\boldsymbol{\times};\ \bullet A,\, \bullet B,\,\bullet\boldsymbol{\times};\ \boldsymbol{\times}\bullet,\,\boldsymbol{\times\times},\, \boldsymbol{\times} B|.
\Eeq
\end{corollary}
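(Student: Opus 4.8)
The plan is to read the doublets directly off the twenty tribons of Lemma~\ref{20tribons}, since every pair of consecutive letters in $(N)_{AB\bullet\boldsymbol\times}$ sits inside at least one length-three window of the word, appearing either as the first two symbols of that window or — only at the very end of the representation — as the trailing pair of a window marked by the end bar $|$. Thus the whole statement reduces to careful bookkeeping on the list already established.

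First I would collect the \emph{leading} doublets, i.e.\ the first two symbols of each of the twenty tribons, discarding repetitions. Grouping by the leading letter, the three $A$-tribons all begin $A\bullet$; the six $B$-tribons give $BB$, $BA$ and $B\boldsymbol{\times}$; the six $\bullet$-tribons give $\bullet A$, $\bullet B$ and $\bullet\boldsymbol{\times}$; and the five $\boldsymbol{\times}$-tribons give $\boldsymbol{\times}\bullet$ and $\boldsymbol{\times\times}$. This produces exactly nine distinct leading doublets.

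Next I would account for the end of the word. A doublet occupying the last two positions of $(N)_{AB\bullet\boldsymbol\times}$ need not be the leading pair of any tribon; it appears instead as the trailing pair of one of the three terminal tribons $BAB|$, $\bullet AB|$ and $\boldsymbol{\times\times}B|$, which contribute precisely the two additional doublets $AB|$ and $\boldsymbol{\times}B|$. Together with the nine leading doublets this yields the claimed eleven.

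Finally I would argue exhaustiveness and realizability. Each of the eleven doublets occurs as the prefix (or, for the two bar-doublets, as the bar-suffix) of some tribon in Lemma~\ref{20tribons}, hence is realized by at least one $N$. Conversely no further doublet can arise: any interior consecutive pair is the leading pair of the tribon it forms with the following letter, and every such tribon is among the twenty of the Lemma, while any terminal pair comes from a $|$-tribon. The one point requiring care — and the only place the count could go wrong — is the bookkeeping at the word's end: I must verify that the trailing pairs of the \emph{non}-terminal tribons are all already among the nine leading doublets (for instance $A\bullet A$ ends in $\bullet A$, $BB\boldsymbol{\times}$ ends in $B\boldsymbol{\times}$, $\boldsymbol{\times}\bullet A$ ends in $\bullet A$, and so on), so that only the two genuinely new terminal doublets $AB|$ and $\boldsymbol{\times}B|$ need be adjoined, giving no more and no fewer than the eleven listed.
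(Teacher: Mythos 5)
Your proposal is correct and takes essentially the same route as the paper, which states the corollary without separate proof precisely because it is read off from the twenty tribons of Lemma~\ref{20tribons} — exactly the leading-pair plus bar-suffix bookkeeping you carry out (nine leading doublets $A\bullet$, $BB$, $BA$, $B\boldsymbol{\times}$, $\bullet A$, $\bullet B$, $\bullet\boldsymbol{\times}$, $\boldsymbol{\times}\bullet$, $\boldsymbol{\times\times}$, plus the terminal $AB|$ and $\boldsymbol{\times}B|$ from the three bar-tribons). The only caveat is the length-two word $AB$ arising for $N=1$, which contains no length-three window at all, so your opening claim that every consecutive pair sits inside a tribon admits that one trivial exception — harmless here, since $AB|$ is already on the list.
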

\psn
The following lemma collects information which is also later used for proving that the map from $\bf \widehat{ZT}(N)$ to $(N)_{AB\bullet\boldsymbol\times}$ is invertible.
\psn
\begin{lemma} \label{Rules} {\bf Rules for $\bf (N)_{AB\bullet\boldsymbol\times}$}
\pn
1) There is no doublet $\bullet\bullet$.\psn
2) $\boldsymbol{\times}$ always appears either as the triplet $\boldsymbol{\times\times}\bullet$ or at the end as the triplet $\boldsymbol{\times\times}B|$. There is no tribon $\boldsymbol{\times\times\times}$ (see above).\psn
3) $\bullet$ always appears either in the doublet $A\bullet$ or in the triplet $\boldsymbol{\times\times}\bullet$.\psn 
4) $A$ always appears either as doublet $A\bullet$ or as doublet $AB|$ at the end.
\end{lemma}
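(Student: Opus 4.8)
The plan is to read all four statements off the local structure of $\widehat{ZT}(N)$ together with the substitution rules of Definition~\ref{Subsrules}. First I would record the two structural facts about the source word that drive everything: by Definition~\ref{mapA}, $\widehat{ZT}(N)\sspeq 0\,\overline{ZT(N)}\,0$ begins and ends with $0$, and since the tribonacci condition $f_i(N)f_{i+1}(N)f_{i+2}(N)\sspeq 0$ of eq.~\ref{NT} forbids three consecutive $1$s in $ZT(N)$ (hence in its reversal), $\widehat{ZT}(N)$ contains no block $111$. Consequently every maximal run of $1$s has length $1$ or $2$ and is flanked on both sides by $0$s (the boundary $0$s included). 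I would also note that the substitution is position-preserving: each entry except the dropped final $0$ is replaced in place by exactly one letter, so the output letter at index $k$ is the image of the source bit at index $k$, and adjacency of output letters mirrors adjacency of source bits. Finally I would tabulate which rule can produce each letter: $A$ only from $(S{\bf\underline{0}}3)$; $\bullet$ only from $(S{\bf\underline{1}}2)$; $\boldsymbol{\times}$ only from $(S{\bf\underline{0}}2)$ or $(S{\bf\underline{1}}1)$; and $B$ from $(S{\bf\underline{0}}1)$ or $(S{\bf\underline{1}}3)$.

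With this in hand the four parts become short local case analyses. For part~1, a $\bullet$ at index $k$ forces, by $(S{\bf\underline{1}}2)$, source bits $w_k\sspeq 1$ and $w_{k+1}\sspeq 0$; a $\bullet$ at index $k+1$ would require $w_{k+1}\sspeq 1$, a contradiction, so $\bullet\bullet$ cannot occur. For part~4, an $A$ comes from the window $w_k w_{k+1} w_{k+2}\sspeq 010$; its successor, read from $w_{k+1}\sspeq 1$ followed by $w_{k+2}\sspeq 0$, is $\bullet$ via $(S{\bf\underline{1}}2)$ when an entry follows (interior case, giving $A\bullet$) and $B$ via $(S{\bf\underline{1}}3)$ when that $0$ is the dropped final entry (end case, giving $AB|$); since the word ends in $10$, an $A$ is never the last letter, so these are the only possibilities. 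For part~3, a $\bullet$ has $w_k\sspeq 1$, and its predecessor bit $w_{k-1}$ (which exists, since $\bullet$ is never the image of the leading $0$) is either $0$, giving $A$ via $(S{\bf\underline{0}}3)$ and hence $A\bullet$, or $1$, in which case $w_{k-1}w_k\sspeq 11$ is a length-two run with $w_{k-2}\sspeq 0$, so indices $k-2,k-1$ produce $\boldsymbol{\times}\boldsymbol{\times}$ via $(S{\bf\underline{0}}2),(S{\bf\underline{1}}1)$ and we obtain $\boldsymbol{\times\times}\bullet$.

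Part~2 is the main step, and it is where the boundary bookkeeping has to be done carefully. Every $\boldsymbol{\times}$ sits inside a maximal run $0110$ of the source word: the leading $0$ fires $(S{\bf\underline{0}}2)$ and the first $1$ fires $(S{\bf\underline{1}}1)$, producing exactly the pair $\boldsymbol{\times}\boldsymbol{\times}$, while the trailing $1$ (followed by its flanking $0$) fires $(S{\bf\underline{1}}2)$ to give $\bullet$ in the interior or, when that $0$ is the final dropped entry, $(S{\bf\underline{1}}3)$ to give the terminal $B$. Because runs have length at most $2$, no third consecutive $1$ is available to extend the block, which simultaneously yields $\boldsymbol{\times\times}\bullet$ or $\boldsymbol{\times\times}B|$ and rules out the tribon $\boldsymbol{\times\times\times}$. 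The subtlety to watch is that a single $\boldsymbol{\times}$ cannot occur in isolation: a $\boldsymbol{\times}$ from $(S{\bf\underline{0}}2)$ is always immediately followed by one from $(S{\bf\underline{1}}1)$ and conversely, so they genuinely come paired. As a cross-check, all four statements can be verified against the explicit list of Lemma~\ref{20tribons} and the eleven doublets of Corollary~\ref{Coro4}: the doublet $\bullet\bullet$ is absent, every $\boldsymbol{\times}$-tribon is an extension of $\boldsymbol{\times\times}\bullet$ or $\boldsymbol{\times\times}B|$, and the only doublets beginning with $\bullet$ or $A$ are $\bullet A,\bullet B,\bullet\boldsymbol{\times}$ and $A\bullet,AB|$, matching parts~3 and~4.
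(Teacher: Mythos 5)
Your proof is correct and follows essentially the same route as the paper's: a local case analysis of bit windows of $\widehat{ZT}(N)$ under the six substitution rules of Definition~\ref{Subsrules}, driven by the facts that $\widehat{ZT}(N)$ begins and ends with $0$ and contains no $111$; your treatments of parts 2 and 4 match the paper's almost verbatim. The only divergences are local refinements: for part 1 the paper simply cites Corollary~\ref{Coro4} (the doublet list read off the tribon enumeration), whereas you argue directly from the provenance of $\bullet$ under rule $(S\underline{1}2)$; and for part 3 the paper looks two bits to the left, producing three cases and a recursive ``continuation'' discussion for the case $10\underline{1}0x$, whereas your one-bit lookback (a predecessor bit $0$ forces the letter $A$ by rule $(S\underline{0}3)$, regardless of anything further left) settles the claim with no recursion — a genuine, if small, streamlining of the paper's argument.
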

\begin{proof}
1) From the {\it Corollary} ~\ref{Coro4}.\psn
2) $\boldsymbol{\times}$ originates either from the substitution i) $\underline{0}11$ or ii) $\underline{1}1$. Case i) continues as $\underline{0}110x$ with $x\sspin \{0,\,1\}$ or as $\underline{0}110\emptyset$ because no three consecutive $1$s appear in $\widehat{ZT}(N)$. Hence after substitution as $xx\bullet$ or as $xxB|$.
Case ii) continues to the left as $0\underline{1}1$ (no three consecutive $1$s), hence is substituted by $xx\bullet$ or $xxB|$ because to the right follows $0x$ or $0\emptyset$. 
\psn
3) A $\bullet$ originates only from $\underline{\bold 1}0x$ with $x\sspin \{0,\,1\}$. It can be continued to the left as i) $00\underline{\bold 1}0x$, ii) $10\underline{\bold 1}0x$ or iii)  $01\underline{\bold 1}0x$ (no three consecutive $1$s). After substitution i) leads to $BA\bullet$ and iii) to $\boldsymbol{\times\times}\bullet$. Case ii) leads to the tribon $\bullet A\bullet$ and a continuation to the left will find either an $A$ or a $\boldsymbol{\times\times}$, or again a new $\bullet$ on the left which has to be continued, etc.\psn
4) $A$ originates only from $\underline{010}$ and it continues with $x\sspin \{0,\,1\}$ if this is not the end of the representation. In the first case it becomes $A\bullet$, and in the second $AB|$ without a $\bullet$ following $A$.  
\end{proof}
\psn
Note that whenever $\boldsymbol{\times}$ appears in a tribon other than one of the two given in {\it 2}, \ie as $\bullet\boldsymbol{\times\times}$ or as  $\boldsymbol{\times}\bullet\boldsymbol{\times}$, this means that these tribons are continued in both directions to give one of the claimed two combination. Similarly if $A$ is not followed by $\bullet$ or the final $B$, or if $A$ is preceded by a $\bullet$ the continuation to the right or left will show the claimed two proved  combinations in any representation of $N$.
\psn
This $(N)_{AB\bullet\boldsymbol\times}$ representation can be visualized on an edge and node labeled tribonacci tree, called $ABCTree_n$ if the bottom level is $n$. Such a tree can be used for the representation of $N\sspin\{\range{1}{T(n+2)\sspm 1}\}$.  $N\sspeq 0$ could also be represented but in the equivalence proof only positive $N$ is considered. The nodes are labeled like in the tribonacci tree $TTree_n$. The tree is read from the bottom level $l\sspeq n$ upwards. The edges are labeled on both sides. If the labels on both sides coincide only one label for the edge is depicted. The edges $00$, $01$ and $02$ (always upwards directed) are labeled on the \lhs by $B$ and on the \rhs by $\bullet$. An exception is the top edge $02$ between levels $l\sspeq1$ and $l\sspeq0$ which has only a $B$ label for both sides. The edge $10$ has the labels $A$ and $\boldsymbol{\times}$ on the left- and right-hand side, respectively. The outermost left branch has also only edge label $B$ for both sides.\psn
The labeling is done in accordance with the rules of the $(N)_{AB\bullet\boldsymbol\times}$ word given above.
One has to start for the $N$ representation at the bottom level $l\sspeq n$ always with the left-hand label of the first edge. For the next edges going out from from a node, the choice is fixed from the direction from which the previous edge reached the node. If it reached the node from the right-hand side, the label on the right-hand side of the outgoing edge has to be chosen, and similarly for the left-hand side. Because the representation ends always in a single $B$ if an $N$ belongs to the left sub-tree $TTreeL_n$ the path stops after the first edge labeled $B$ on the leftmost branch has been reached. This explains why also the edge between levels $l\sspeq 2$ and $l\sspeq 1$ has to belong to $TTreeL_n$, hence the node $0$ at level $l\sspeq 1$ belongs to both sub-trees. For $N$ belonging to the right sub-tree $TTreeR_n$ the path goes all the way up to level $l\sspeq 0$ with last edge $B$. See {\it Figure 3} for $ABCTree_5$ for numbers $N\sspeq (0),\range{1}{12}$. 
\psn
\Eg $N\sspeq 6$:\ $B\boldsymbol{\times\times}B$, starting with the left-hand side label $B$, then the second 
$\boldsymbol\times$ has to be chosen because the previous edge came from the right-hand side. $N\sspeq 9$:\ $BA\bullet AB$.  
\psn
\psn \parbox{16cm}{\begin{center}
{\includegraphics[height=7cm,width=1\linewidth]{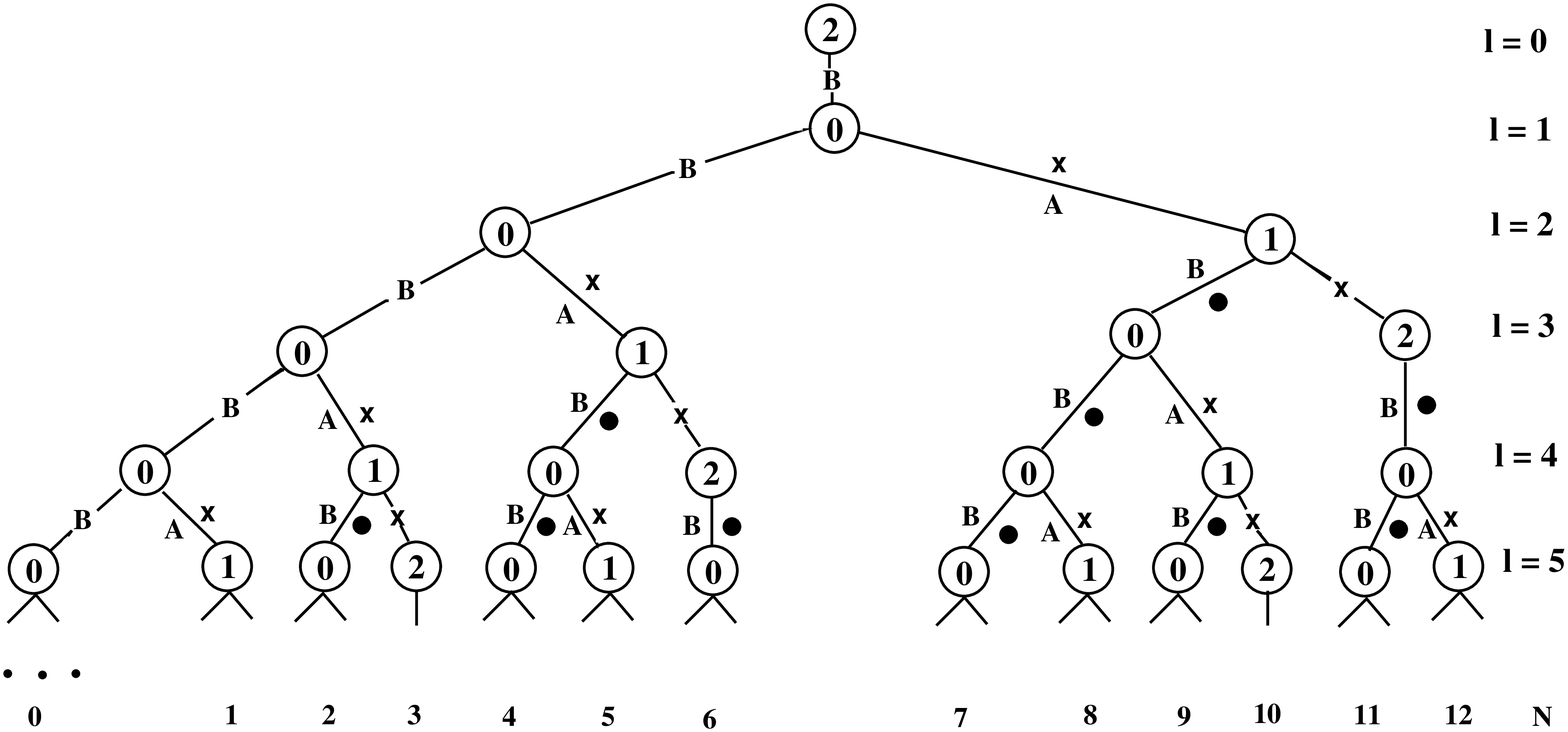}}
\end{center}
}
\psn
\hskip 3cm {\bf  Figure 3: ABC-representation with the $\bf ABCTree_5$} 
\pbn
{\bf C) From $\bf (N)_{AB\bullet\boldsymbol\times}$ to $\bf (N)_{ABC}$} 
\psn
This step is simply performed by the following replacements. It is understood that compositions are meant.\psn
\Beqarray \label{CRep}
&&A\bullet\sspto A,\  \text{and at the end}\ AB \sspto AB(0),\nonumber\\ 
&& \boldsymbol{\times\times}\bullet \sspto C, \text{and at the end}\ \boldsymbol{\times\times}B\sspto CB(0), \nonumber \\
&& B\sspto B. \nonumber
\Eeqarray
\Eg $N\sspeq 6$: $B\boldsymbol{\times\times}B\sspto BCB(0)$, actually $(6)_{ABC}\sspeq B(C(B(0))$. In {\it Table 3} this is denoted as ${ABC}(6)\sspeq 020$, denoting $A,\, B,\,C$ as $1,\,0, 2$. \pn
$N\sspeq 9$:\ $BA\bullet AB\sspto BAAB(0)$, actually $(9)_{ABC}\sspeq B(A(A(B(0))))$. Or ${ABC}(9)\sspeq 0110$.
\pbn
These transformations (mappings) between the representations are now proved to be invertible by reversing the steps.
\psn
{\bf II) From $\bf (N)_{ABC}$ to $\bf ZT(N)$}
\pbn
{\bf $\bf \overline{C})$\ From $\bf (N)_{ABC}$ to 
$\bf (N)_{AB\bullet\boldsymbol\times}$}
\psn
\Beqarray \label{CbarRep}
&&\text{a final}\ AB(0) \sspto AB,\ \text{and}\ A\sspto A\bullet\,,\nonumber \\
&&\text{a final}\ CB(0) \sspto CB,\ \text{and}\ C\sspto \boldsymbol{\times\times}\bullet\,,\nonumber \\
&& B\sspto B\,. \nonumber
\Eeqarray
\Eg $ (N)_{ABC}\sspeq A^3BCAB(0)\sspto  (N)_{AB\bullet\boldsymbol\times}\sspeq A{\bullet}A{\bullet}A{\bullet}B{\boldsymbol \times\times}{\bullet}AB$.
\pbn
{\bf $\bf \overline B)$\ From $\bf (N)_{AB\bullet\boldsymbol\times}$ to $\bf \widehat{ZT}(N)$}
\psn
Two equivalent replacements can be given based on the inversion of the substitution rules of {\it Definition} ~\ref{Subsrules} taking into account the structure of $(N)_{AB\bullet\boldsymbol\times}$.\psn 
{\bf Version 1)} 
\Beqarray  \label{Rep1}
&& \text{at the end}\ \hskip .7cm  AB|\sspto 010,\ \ \hskip 1.3cm A\bullet\sspto 01\,,\nonumber \\
&& \text{at the end}\  \boldsymbol{\times\times}B|\sspto 0110\,,\ \hskip .8cm   \boldsymbol{\times\times}\bullet\sspto 011,\nonumber \\
&& B\sspto 0\,.\nonumber
\Eeqarray
{\bf Version 2)} \psn
Begin with a $0$ and replace
\Beqarray  \label{Rep2}
&& \text{at the end}\hskip .8cm  AB|\sspto 10,\ \ \hskip 1.3cm A\bullet\sspto 10\,,\nonumber \\
&& \text{at the end}\  \boldsymbol{\times\times}B|\sspto 110\,,\hskip .9cm   \boldsymbol{\times\times}\bullet\sspto 110,\nonumber \\
&& B\sspto 0\,.\nonumber
\Eeqarray
This version can be stated as: Begin with a $0$ and replace $A$ and $\boldsymbol{\times}$ by $1$, and $B$ and $\bullet$ by  $0$.
\pbn
\Eg Version 1)
The substitution rules are 
\Beqarray
(N)_{AB\bullet\boldsymbol\times}&\sspeq& A{\bullet}A{\bullet}A{\bullet}B\boldsymbol{\times\times}{\bullet}AB\,,\nonumber\\
\widehat{ZT}(N)&\sspeq& \,01\ 01\ 01\, 0\ \ 0\ 1\ \,1\,0\,1\,0 \,.\nonumber
\Eeqarray
This coincides with the stated replacements.\psn
In version 2) one shifts $(N)_{AB\bullet\boldsymbol\times}$ by one position to the right, producing the leading $0$ in $\widehat{ZT}(N)$.
\Beqarray
(N)_{AB\bullet\boldsymbol\times}&\sspeq& \ \,A{\bullet}A{\bullet}A{\bullet}B\boldsymbol{\times\times}{\bullet}AB\,,\nonumber\\
\widehat{ZT}(N)&\sspeq& \,010\ 10\, 10\ 0\ 1\ \,1\ \ 0\,1\,0 \,.\nonumber
\Eeqarray
\pbn
{\bf $\bf \overline A)$\ From $\bf \widehat{ZT}(N)$ to  $\bf ZT(N)$}
\psn
Omit the two zeros at the beginning and end, and reverse the binary string to obtain $ZT(N)$.
\pbn
\section{Equivalence of representations $\bf ZT(N)$ and $\bf ABC(N)$}
First the uniqueness of the tribonacci-representation $ZT(N)$ of eq.~\ref{ZTN} is considered.\pn
It is clear that every binary sequence starting with $1$, without three consecutive $1$s, represents some 
$N\sspin \mathbb N$. An algorithm for finding such a representation for every $N\sspin \mathbb N$ is given to prove the following lemma.  
\begin{lemma}
The tribonacci-representation $ZT(N)$ of eq.\ref{ZTN} is unique.
\end{lemma}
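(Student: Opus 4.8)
The plan is to deduce uniqueness from a single \emph{interval lemma} that simultaneously powers the greedy algorithm mentioned above. Call a binary string \emph{admissible} if it satisfies the side conditions of eq.~\ref{NT}: leading digit $1$ and no three consecutive $1$s. I would first establish: if an admissible word has highest index $I$ (so $f_I\sspeq 1$ and $f_i\sspeq 0$ for $i\sspgr I$), then the number it represents obeys $T(I+3)\sspleq N\sspleq T(I+4)\sspm 1$. The lower bound is immediate since $f_I\sspeq 1$ and every summand is nonnegative.

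The upper bound is the crux, and I would prove it by strong induction on $I$, splitting on the top pattern. If $f_{I-1}\sspeq 0$, the tail $f_{I-2}\cdots f_0$ uses only indices $\sspleq I-2$, so by the induction hypothesis it contributes at most $T(I+2)\sspm 1$, giving $N\sspleq T(I+3)\sspp T(I+2)\sspm 1$. If instead $f_{I-1}\sspeq 1$, the no-three-consecutive rule forces $f_{I-2}\sspeq 0$, the tail $f_{I-3}\cdots f_0$ contributes at most $T(I+1)\sspm 1$, and the defining recurrence $T(I+4)\sspeq T(I+3)\sspp T(I+2)\sspp T(I+1)$ yields exactly $N\sspleq T(I+4)\sspm 1$. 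This second, tight case is where the recurrence is essential and is the main obstacle; the small base cases $I\sspeq 0,1,2$ are checked directly.

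Because the intervals $[T(I+3),\,T(I+4)\sspm 1]$ for $I\sspgeq 0$ read $[1,1],\,[2,3],\,[4,6],\,[7,12],\ldots$, they are pairwise disjoint and exhaust $\mathbb N$. Hence for every $N$ there is a unique $I$ with $N\sspin[T(I+3),\,T(I+4)\sspm 1]$, and \emph{every} admissible word for $N$ must carry exactly this highest index, so $f_I\sspeq 1$ is forced. For existence (the algorithm), set $f_I\sspeq 1$ and recurse on $N'\sspeq N\sspm T(I+3)$, which satisfies $N'\sspleq T(I+2)\sspp T(I+1)\sspm 1$ and thus has highest index at most $I-1$; moreover if that index equals $I-1$, the remainder after the next $1$ is $\sspkl T(I+1)$, forcing a $0$ in position $I-2$, so three consecutive $1$s never arise. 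As $N'\sspkl N$, the recursion terminates and outputs an admissible word.

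For uniqueness I would argue by strong induction on $N$. Given two admissible words for the same $N$, the interval lemma forces the same highest index $I$ in both, whence $f_I\sspeq 1$ in each; deleting this leading term leaves two admissible words for the strictly smaller number $N\sspm T(I+3)$. A tail beginning $f_{I-1}\sspeq f_{I-2}\sspeq 1$ cannot occur, since together with $f_I\sspeq 1$ it would be a forbidden triple $111$ (and would in any case push the value to $\sspgeq T(I+2)\sspp T(I+1)\sspgr N\sspm T(I+3)$). By the induction hypothesis the two tails coincide, so the original words agree in every position. Equivalently, one may count: the number of admissible words of length $I+1$ equals $T(I+2)\sspp T(I+1)$, exactly the number of integers in $[T(I+3),\,T(I+4)\sspm 1]$, so the surjectivity supplied by existence upgrades to a bijection, giving uniqueness at once.
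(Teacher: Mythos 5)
Your proof is correct, and it is substantially more complete than the paper's own argument, which consists essentially of just the existence half: the paper defines the greedy recursion $N_j = N_{j-1} - floor(T;\,N_{j-1})$, reads off the digit positions from the indices of the subtracted tribonacci numbers, and stops there --- it verifies neither that the greedy output avoids three consecutive $1$s nor that no \emph{other} admissible word can represent the same $N$. Your interval lemma, $T(I+3) \leq N \leq T(I+4)-1$ for any admissible word with top index $I$, proved by strong induction with the case split on $f_{I-1}$ (where the recurrence $T(I+4) = T(I+3)+T(I+2)+T(I+1)$ makes the $f_{I-1}=1$ case tight), is exactly the ingredient the paper leaves implicit: it is what forces every representation of $N$ to share the same top index, which then feeds your induction on $N$ for uniqueness. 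Your verification that the greedy remainder after two consecutive $1$s drops below $T(I+1)$, forcing $f_{I-2}=0$, likewise fills a genuine omission in the paper. So your route buys an actual proof of uniqueness where the paper offers an algorithm plus an assertion. One small caution: your alternative counting finish needs the enumeration of admissible words of length $I+1$ as $T(I+2)+T(I+1)$ to be established combinatorially (by the standard recurrence for binary strings avoiding $111$) rather than via the representation itself, or the argument becomes circular; as written you assert it. Since your inductive argument already closes the proof, this only affects the optional ``equivalently'' remark.
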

\noindent
{\bf Proof:}\psn
The recurrence of the tribonacci sequence $T\sspdef \{T(l)\}_{l=3}^\infty$, with inputs $T(3)\sspeq 1,\, T(4)\sspeq 2$ and $T(5)\sspeq 4$, shows that this sequence is strictly increasing. Define the floor function $floor(T;\,n)$, for $n\sspin \mathbb N$, giving the largest member of $T$ smaller or equal to $n$. The corresponding index of $T$ will then be called $Ind(floor(T;\, n))$. Define the finite sequence $\Nseq\sspdef \{N_j\}_{j=1}^{j_{\max}}$ recursively by 
\Beq
N_j\sspeq N_{j-1}\sspm floor(T;\,N_{j-1})\,,\ \ \ \text{for}\ \ j\sspeq \range{1}{j_{\max}},\, 
\Eeq
with $N_0\sspeq N$ and $N_{j_{\max}}\sspeq 0$, but$N_{j_{\max-1}}\sspneq 0$. \psn
It is clear that this recurrence reaches always $0$. Define the finite sequences $fTN\sspdef \{ floor(T;\,N_j)\}_{j=0}^{j_{\max}-1}$ and $IfTN\sspdef\{Ind(fTN_j)\}_{j=0}^{j_{\max}-1}$. Then $I(N)$ in eq.~\ref{ZTN} is given by $I(N)\sspeq IFTN_0$ and the finite sequence $\fseq\sspeq \{f_{I(N)\sspm k}\}_{k=0}^{I(N)}$ is given by
\Beq
f_{I(N)\sspm k} \sspeq {\Cases2{$1$}{$\text{if}\ \ I(N)\sspm k\sspin IfTN\, ,$}{$0$}{\text{otherwise}\ .}}
\Eeq
\hskip 16.1cm $\square$
\pn
{\bf Example 3} $N\sspeq 263$. $\Nseq\sspeq\{263,\,144,\,33,\,9\,2,\,0\}$, $fTN\sspeq \{149,\,81,\,24,\,7,\,2\}$, 
$IfTN\sspeq \{8,\,7,\,5,\,3,\,1\}$, $I(N)\sspeq 8$, $\fseq \sspeq \{1,\,1,\,0,\,1,\,0,\,1,\,0,\,1,\,0\}$.
\pbn
Next follows the lemma on the uniqueness of the $ABC$ representation given in eq.~\ref{NABC} or eq. ~\ref{NABC012}.\psn
\begin{lemma}
The tribonacci $ABC$ representation $(N)_{ABC}$ of eq.\ref{NABC}, for $N\sspin \mathbb N_0$, is unique.
\end{lemma}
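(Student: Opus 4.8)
The plan is to prove uniqueness by a well-founded induction on $N$, peeling off the outermost sequence symbol one application at a time. The argument rests on two facts already available. First, as noted in the introduction, the ranges of $A$, $B$ and $C$ partition $\mathbb{N}_0$: every $p \in \mathbb{N}_0$ equals $A(m)$, $B(m)$ or $C(m)$ for exactly one of the three sequences and one index $m$, according to whether $t(p) = 1$, $0$ or $2$. Second, $A$, $B$ and $C$ are strictly increasing with $A(0) = 1$, $B(0) = 0$ and $C(0) = 3$, so that $X(m) > m$ in every case except $B(0) = 0$; hence for $N \geq 1$ the preimage $m$ obtained by inverting the forced symbol satisfies $0 \leq m < N$, which is what makes the induction well-founded.

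First I would dispose of the base case $N = 0$. Any nonempty product ends, innermost, in a symbol $X_J \in \{A, C\}$, so the value starts at $X_J(0) \in \{1, 3\}$, and since each of $A$, $B$, $C$ carries positive integers to positive integers, every further application keeps the value $\geq 1$. Thus no nonempty product can evaluate to $0$, and $(0)_{ABC} = B(0)$ with empty product is the only representation.

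For $N \geq 1$ the outermost applied symbol is forced. Writing $(N)_{ABC} = (X_1^{k_1} \cdots X_J^{k_J}) B(0)$, the number $N$ lies in the range of $X_1$, so disjointness of the three ranges pins down $X_1 = A$, $B$ or $C$ exactly according to $t(N)$. Inverting gives $X_1^{-1}(N) < N$, and one keeps stripping copies of $X_1$ as long as the current value lies in the range of $X_1$; this forces the multiplicity $k_1$, because after the last removal the value leaves that range --- it is then either an image of $X_2 \neq X_1$, or, when $J = 1$, the terminal $0$, and neither lies in the range of $X_1$. Passing to the strictly smaller number left after all $X_1$'s are removed, the induction hypothesis gives uniqueness of the remaining word, whence uniqueness of the whole representation. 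The terminal condition $X_J \in \{A, C\}$ comes for free: the peeling halts exactly when the preimage first equals $0$, i.e.\ when the current value is $A(0) = 1$ or $C(0) = 3$, since the value being inverted is $\geq 1$ and so cannot be $B(0) = 0$; the mandatory trailing $B(0)$ merely records the base point.

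The step needing the most care, and the main obstacle, is the bookkeeping that converts the one-application-at-a-time peeling into the grouped power form while honouring both the adjacency constraint $X_j \neq X_{j+1}$ and the terminal constraint $X_J \in \{A, C\}$. Concretely, one must check that disjointness of the three ranges forces not merely each symbol $X_j$ but also each exponent $k_j$, and that the strictly decreasing chain of preimages reaches $0$ without stalling or overshooting. Once the strict decrease $m < N$ and the range partition are secured, these verifications are routine and the induction closes.
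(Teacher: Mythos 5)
Your proof is correct and takes essentially the same route as the paper: both pin down the outermost symbol at each stage via the disjoint, $\mathbb{N}_0$-complementary ranges of $A$, $B$, $C$ and peel the representation from the outside in, the paper's $2$-list recurrence $L(j)\sspeq [X_n,\,k(n)]$ being exactly your symbol-stripping iteration terminating at $[B,\,0]$. Your write-up merely makes explicit two points the paper leaves implicit, namely the well-foundedness of the descent (via $X(m)\sspgr m$ except for $B(0)\sspeq 0$) and the base case that no nonempty product can evaluate to $0$.
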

\noindent
{\bf Proof:}\psn
From the definition of the $A,\,B$ and $C$sequences (each with offset $0$) based on the value $1,\, 0$ and $2$, respectively, of $t(n)$, for $n \sspin \mathbb N_0$, it is clear that these sequences are disjoint and $\mathbb N_0$-complementary. $0$ is represented by $B(0)$. Therefore the $n$-fold iteration $B^{[n]}(0)$  (written as $B^n(0)$) is allowed only for $n\sspeq 1$, and any representation ends in $B(0)$. Iterations acting on $0$ are encoded by words over the alphabet $\{A,\,B,\,C\}$, and $n$-fold repetition of a letter $X$ is written as $X^n$, named $X-$block, where $n\sspeq 0$ means that no such $X-$block is present. Then any word consisting of consecutive different non-vanishing $X-$blocks ending in the $B-$block $B^1$ represents a number $N\sspin \mathbb N_0$.\pn
In order to prove that with such representations every $N\sspin \mathbb N_0$ is reached the following algorithm is used.
Replace any number $n\sspin \mathbb N_0$, which is $n\sspeq X_n(k(n))$ with $X_n\sspin \{A,\,B,\,C\}$ and $k(n)\sspin \mathbb N_0$, by the $2$-list $L(n)\sspeq [L(n)_1,\,L(n)_2]\sspdef [X_n,\,k(n)]$ . Define the recurrence 
\Beq
L(j) \sspeq [L(L(j-1)_2)_1,\, L(L(j-1)_2)_2],\ \ \text{for}\ \ j\sspeq \range{1}{j_{\max}},
\Eeq
with\ input $L(0)\sspeq [X_N,\,k(N)]$, and $j_{\max}$ is defined by $L(j_{\max})\sspeq [B,\,0]$.
\pn 
Then the word is $w(N)\sspeq \cProd_{j=0}^{j_{\max}}\, L(j)_1$ (a concatenation product), and read as iterations acting on $0$ this becomes the representation $(N)_{ABC}$. The length of the word $w(N)$ is $j_{\max}\sspp 1$. \psn
\hskip 16.1cm $\square$
\pn
{\bf Example 4} $N\sspeq 38$. $L(0)\sspeq [A,\,11]$, $L(1)\sspeq [B,\,6]$, $L(2)\sspeq [B,\, 3]$, $L(3)\sspeq [C,\,0]$, and $L(4)\sspeq [B,0]$, hence $j_{\max}(38)\sspeq 4$, $w(38)\sspeq ABBCB$, and $(38)_{ABC}\sspeq ABBCB(0)$, to be read as $A(B(B(C(B(0)))))$.
\pbn
After these preliminaries the main theorem can be stated.
\begin{theorem*}
The tribonacci-representation $ZT(N)$ of  eq.~\ref{NT}, is equivalent to the tribonacci $ABC$-representation $(N)_{ABC}$ eq.~\ref{NABC},  for $N\sspin \mathbb N$. \Ie given any binary word $ZT(N)$ of eq.~\ref{NT}, beginning with $1$ and no three consecutive $1$s, which represents a unique $N$ one finds the unique representation of this $N$ given by $(N)_{ABC}$ without knowing the actual value of $N$; and {\it vice versa}, given a word $(N)_{ABC}$, representing uniquely a number $N$ one finds the representation of this $N$ as a binary word $ZT(N)$.   
\end{theorem*}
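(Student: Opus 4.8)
The plan is to prove the theorem by verifying that the forward chain $I \sspeq C \circ B \circ A$, sending $ZT(N)$ to $(N)_{ABC}$, and the backward chain $II \sspeq \overline{A} \circ \overline{B} \circ \overline{C}$ are well defined on the sets of admissible words and are mutually inverse, and then that this bijection carries $ZT(N)$ to the $ABC$ word representing the \emph{same} integer $N$. Once both of these are in hand, the two uniqueness lemmas (for $ZT(N)$ and for $(N)_{ABC}$) close the argument: an admissible binary word beginning with $1$ and having no three consecutive $1$s determines a unique $N$, its image under $I$ is the unique $ABC$ word of that $N$, and conversely. I would organise the work as three invertibility checks, one per stage, followed by a single correctness (value-preservation) check.

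Stages $A$ and $C$ are the easy ones. The map $A$ of Definition~\ref{mapA} only pads with two zeros and reverses, so $\overline{A}$ (strip the two boundary zeros, reverse) inverts it on the nose. For the map $C$ one must check that the replacement blocks $A\bullet$, $\boldsymbol{\times\times}\bullet$, $B$ together with their terminal variants $AB|$ and $\boldsymbol{\times\times}B|$ partition every admissible $(N)_{AB\bullet\boldsymbol\times}$ word unambiguously; this is exactly the content of Lemma~\ref{Rules}, which shows that each $A$ sits in a unique $A\bullet$ or terminal $AB$, each $\boldsymbol{\times}$ in a unique $\boldsymbol{\times\times}\bullet$ or terminal $\boldsymbol{\times\times}B$, and each $\bullet$ in one of those two blocks, while the stray $B$'s are left untouched. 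Hence $\overline{C}$ reverses $C$ block by block.

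The stage $B$ is the crux and the place I expect the real work. Here the substitution is context sensitive: each underlined entry of $\widehat{ZT}(N)$ is replaced according to the one or two symbols that follow it, per the six rules of Definition~\ref{Subsrules}. I would first confirm that these six rules are exhaustive and mutually exclusive on every non-terminal entry, using that $\widehat{ZT}(N)$ contains no three consecutive $1$s and ends in $10$, so that no admissible local pattern is left unmatched and none matches two rules. I would then verify that one of the two versions of $\overline{B}$ reconstructs $\widehat{ZT}(N)$ from $(N)_{AB\bullet\boldsymbol\times}$, appealing to the catalogue of Lemma~\ref{20tribons} and Corollary~\ref{Coro4}: the $20$ tribons and $11$ doublets list every local neighbourhood that can occur, and Lemma~\ref{Rules} shows the structure is rigid enough that the inverse replacement of each block is forced. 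The main obstacle is precisely this bookkeeping, because a context-sensitive substitution need not be injective in general; what saves us is that the tribon and doublet catalogue certifies that no $(N)_{AB\bullet\boldsymbol\times}$ word has two preimages and no $\widehat{ZT}$ pattern is ambiguous.

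It remains to show the bijection preserves the represented number, and for this I would use the tree picture directly. The trees $TTree_n$ and $ABCTree_n$ are one and the same tribonacci tree, with the same node labels and the same node numbering $N\sspeq 0,\,\ldots,\,T(n+2)\sspm1$ on the bottom level; they differ only in that one carries node labels and the other carries the edge labels dictated by the rules for $(N)_{AB\bullet\boldsymbol\times}$. Reading the unique path from the bottom node $N$ to the root through its \emph{node} labels (with $2\sspto1$, then reversed) produces $ZT(N)$, while reading the \emph{same} path through its \emph{edge} labels produces $(N)_{AB\bullet\boldsymbol\times}$ and hence, via stage $C$, the word $(N)_{ABC}$. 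Since both readings come from the identical path to the identical node, the $ABC$ word so obtained is a composition of $A,\,B,\,C$ acting on $0$ that represents $N$; by the corresponding uniqueness lemma it must equal $(N)_{ABC}$. This identifies the formal image $I(ZT(N))$ with $(N)_{ABC}$, and running the argument backwards with $II$ gives the converse direction, completing the equivalence.
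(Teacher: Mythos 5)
Your proposal follows essentially the same route as the paper: the same three-stage decomposition (pad-and-reverse, the six context-sensitive substitution rules of Definition~\ref{Subsrules}, block replacement to $ABC$), inverted stage by stage using Lemma~\ref{Rules} together with the tribon and doublet catalogue of Lemma~\ref{20tribons} and Corollary~\ref{Coro4}, with the two uniqueness lemmas closing the argument. The only difference is one of emphasis: you spell out the value-preservation step via the identification of $TTree_n$ and $ABCTree_n$ (node labels give $ZT(N)$, edge labels along the same path give $(N)_{AB\bullet\boldsymbol\times}$), whereas the paper leaves this implicit, remarking only that the $(N)_{AB\bullet\boldsymbol\times}$ words can also be read off the $ABCTree_n$.
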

\psn
{\bf Proof:}\psn
Part {\bf I)}: The proof of the map $ZT(N)\sspto (N)_{ABC}$ is performed in three steps:
\psn
\Beqarray
\text{Step\ A)}:\ \ &&ZT(N)\ \ \sspto \widehat{ZT}(N)\sspdef 0(ZT(N)_{\text{reverse}})0\,, \nonumber\\ 
\text {Step\ B)}:\ \ &&\widehat{ZT}(N)\ \ \sspto  (N)_{AB\bullet\boldsymbol{\times}}\, , \nonumber\\
\text {Step\ C)}:\ \  &&(N)_{AB\bullet\boldsymbol{\times}}\sspto (N)_{ABC}\,. \nonumber 
\Eeqarray
These steps have been explained in {\it section 2}, part I) in detail. Step A) is clear. In step B) the six substitution rules of {\it Definition}~\ref{Subsrules} are important. They lead to the result that in $(N)_{AB\bullet\boldsymbol{\times}}$ only certain combinations like $A\bullet$, $\boldsymbol{\times\times}\bullet$, and at the end of the representation $AB$ or $\boldsymbol{\times\times}B$ can occur, besides any powers of $B$, but only one $B$ at the end. 
This $(N)_{AB\bullet\boldsymbol{\times}}$ words can also be read off from the $ABCTree_n$, given for  
$n\sspeq5$ in {\it Figure 3}, read from bottom to top with certain rules to choose the edge labels given in {\it section 2}. Step C) is again simple, because only the mentioned combinations can appear in $(N)_{AB\bullet\boldsymbol{\times}}$. 
\pn
Part {\bf II)}: The proof of the map $(N)_{ABC} \sspto ZT(N)$ is performed in three steps:
\psn
\Beqarray
{\text Step\ {\overline C})}:\ \  && (N)_{ABC}\ \ \sspto (N)_{AB\bullet\boldsymbol{\times}}\,, \nonumber\\ 
{\text Step\ {\overline B})}:\ \ &&(N)_{AB\bullet\boldsymbol{\times}}\ \ \sspto \widehat{ZT}(N)\,, \nonumber\\
{\text Step\ {\overline A})}:\ \  &&\widehat{ZT}(N)\ \ \sspto ZT(N)\,. \nonumber
\Eeqarray
Step $\overline C)$ uses the specific occurrences of $A\bullet$, $\boldsymbol{\times\times}\bullet$, and special endings mentioned above. Step $\overline B)$ is given in two equivalent versions. This is based on the six substitution rules ~\ref{Subsrules} in the reverse direction, using again the special combinations in which $A$ and $\boldsymbol{\times\times}$ appear. Step $\overline C)$ is again trivial.
\hskip 14.6cm $\square$\ 
\pbn
\pbn
\section{\bf Investigation of the $\bf A$,\,$\bf B$\, and $\bf C$ sequences}
\ps
In this section a detailed investigation of the $A,\,B\,$ and $C$ sequences is presented.
Some of these results can be found in references \cite{Carlitz}, \cite{Barcucci}, in \cite{OEIS} and elsewhere, but here the emphasis is on a derivation based on the infinite ternary tribonacci word $\TWord$ written as a sequence $t\sspeq$\seqnum{A080843} (see also {\it Table 1}). Its self-similarity leads to the following {\it Definition}, {\it Lemma} and six {\it Propositions}.
\pn
\begin{definition} \label{tword}
The tribonacci words $tw(l)$ over the alphabet $\{0,\,1,\, 2\}$ of length $\#tw(l) = T(l+2)$ are defined recursively by concatenations (we omit the concatenation symbol $\circ$) as
\Beq
tw(l) \sspeq tw(l-1)\,tw(l-2)\,tw(l-3),\ \ \  \text{with}\ \ tw(1)\sspeq 0,\, tw(2)\sspeq 01,\, tw(3)\sspeq 0102\,.  
\Eeq
\end{definition}
\noindent
Also $tw(0)\sspeq 2$ is used. \pn
The substitution map acting on tribonacci words and other strings with characters $\{0,\,1,\,2\}$ is defined  as a concatenation homomorphism by $\sigma\ :\ 0 \sspmapsto 01,\ 1\sspmapsto 02,\ 2\sspmapsto 0$. The inverse map is $\sigma^{[-1]}$ (One replaces first each $01$ and $02$ then the left over $0$). With $\sigma$ the words $tw(l)$ are generated iteratively from $tw(0) \sspeq 2$. $\sigma(tw(l))\sspeq tw(l+1)$, for $l\sspin \mathbb N_0$, and \dstyle{\lim_{l\to \infty} \sigma^{[l]}(0) \sspeq \TWord}. Self-similarity of $\TWord$ means $\sigma(\TWord) \sspeq \TWord$. \pn
Substrings of $\TWord$ of length $n$, starting with the first letter (number) $t(0) = 0$, are denoted by $s_n\sspdef \cProd_{j=0}^{n-1} t(n)$. If $n\sspeq T(l+2)$, for $l\sspin \mathbb N_0$, then $s_n = tw(l)$ (the string becomes a tribonacci word), and the numbers of $s_n$ map to the node labels of the last level of $TTree_{l}$ read from the \lhs. \pn
Also substrings of $\TWord$ not starting with $t(0)$ are used, like $\hat s_2\sspeq 02\sspeq \sigma(1)$, starting with $t(2)$.  \psn
In the following {\it Lemma} four definition of infinite words $t_i$, for $i\sspeq \range{1}{4}$, based on certain substrings $s_n$ (not all of them are tribonacci words) are given which are proved to be different partitions of $TWord$. This result will be used in the proofs of the following first {\it Propositions}~\ref{Prop10},~\ref{Bseq} and~\ref{Prop13}. 
\begin{lemma} \label{Partitions}\pbn 
\pbn
{\bf A)} With $s_{13}\sspeq 0102010010201\sspeq tw(5)$, $s_{11}\sspeq 01020100102$ and $s_7 \sspeq 0102010 \sspeq tw(4)$ define
\Beq\label{self1}
t_1\sspeq s_{13}s_{11}s_{13}s_{7}s_{13}s_{11}s_{13}s_{13}s_{11}s_{13}s_{7}s_{13}... \sspeq \cProd_{j=0}^{\infty}\, s_{\varepsilon(t(j))}, 
\Eeq 
where $\varepsilon(0)\sspeq 13$, $\varepsilon(1)\sspeq 11$ and $\varepsilon(2)\sspeq 7$.\psn
{\bf B)} With $s_7\sspeq 0102010\sspeq tw(4)$, $s_6\sspeq 010201$ and $s_4 \sspeq 0102\sspeq tw(3)$ define
\Beq\label{self2}
t_2\sspeq s_7s_6s_7s_4s_7s_6s_7s_7s_6s_7s_4s_7... \sspeq \cProd_{j=0}^{\infty}\, s_{\pi(t(j))}, 
\Eeq 
where $\pi(0)\sspeq 7$, $\pi(1)\sspeq 6$ and $\pi(2)\sspeq 4$.\psn
{\bf C)} With $s_4\sspeq 0102\sspeq tw(3)$, $s_3\sspeq 010$ and $s_2 \sspeq 01\sspeq tw(2)\sspeq \sigma(0)$ define
\Beq \label{self3}
t_3\sspeq s_4s_3s_4s_2s_4s_3s_4s_4s_3s_4s_2s_4... \sspeq \cProd_{j=0}^{\infty}\, s_{\tau(t(j))}, 
\Eeq
where $\tau(0)\sspeq 4$, $\tau(1)\sspeq 3$ and $\tau(2)\sspeq 2$.\psn
{\bf D)}
With $s_2\sspeq 01$, $\hat s_2\sspeq 02$ and $s_1 \sspeq 0\sspeq tw(1)\sspeq \sigma(2)$ define
\Beq \label{self4}
t_4\sspeq s_2\hat s_2s_2s_1s_2\hat s_2s_2s_2\hat s_2s_2s_2s_1...  
\Eeq
Here the string follows $t$ with $s_2$, $\hat s_2$ and $s_1$ playing the role of $0,\,1$ and $2$, respectively.\pn
Then
\Beq
t_1 \sspeq t_2 \sspeq t_3 \sspeq t_4\sspeq \TWord\,.
\Eeq
\end{lemma}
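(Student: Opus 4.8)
The plan is to recognize that all four words arise from one mechanism: each $t_i$ is simply the $k$-th iterate of the substitution homomorphism $\sigma\ :\ 0\sspmapsto 01,\ 1\sspmapsto 02,\ 2\sspmapsto 0$ applied letter-by-letter to $\TWord$, with $k\sspeq 4,3,2,1$ for $i\sspeq 1,2,3,4$ respectively. Since $\sigma$, and hence $\sigma^{[k]}$, is a concatenation homomorphism, applying $\sigma^{[k]}$ to each letter $t(j)$ and concatenating is the same as applying it to the whole word, so $t_i\sspeq \cProd_{j=0}^{\infty}\sigma^{[k]}(t(j))\sspeq \sigma^{[k]}\!\left(\cProd_{j=0}^{\infty}t(j)\right)\sspeq \sigma^{[k]}(\TWord)$. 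Iterating the self-similarity $\sigma(\TWord)\sspeq \TWord$ then gives $\sigma^{[k]}(\TWord)\sspeq \TWord$ for every $k\sspin \mathbb N_0$, and all four identities follow simultaneously.

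The substance of the argument is therefore the verification that the three substrings chosen in each definition are exactly $\sigma^{[k]}(0)$, $\sigma^{[k]}(1)$, $\sigma^{[k]}(2)$. I would start with {\bf D)}, which is transparent: $\sigma(0)\sspeq 01\sspeq s_2$, $\sigma(1)\sspeq 02\sspeq \hat s_2$, and $\sigma(2)\sspeq 0\sspeq s_1$, which is precisely the stated rule for $t_4$ (with $s_2,\hat s_2,s_1$ in the roles of $0,1,2$). For {\bf C)} I would compute $\sigma^{[2]}(0)\sspeq \sigma(01)\sspeq 0102\sspeq s_4$, $\sigma^{[2]}(1)\sspeq \sigma(02)\sspeq 010\sspeq s_3$, $\sigma^{[2]}(2)\sspeq \sigma(0)\sspeq 01\sspeq s_2$, matching $\tau(0)\sspeq 4,\ \tau(1)\sspeq 3,\ \tau(2)\sspeq 2$. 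The same one-line step for {\bf B)} gives $\sigma^{[3]}(0)\sspeq 0102010\sspeq s_7$, $\sigma^{[3]}(1)\sspeq 010201\sspeq s_6$, $\sigma^{[3]}(2)\sspeq 0102\sspeq s_4$ (so $\pi\sspeq 7,6,4$), and for {\bf A)} it gives $\sigma^{[4]}(0)\sspeq s_{13}$, $\sigma^{[4]}(1)\sspeq s_{11}$, $\sigma^{[4]}(2)\sspeq s_7$ (so $\varepsilon\sspeq 13,11,7$). Each image is obtained from the previous level by one further application of $\sigma$, so the computations cascade and no image need be expanded from scratch.

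There is no deep obstacle here; the work is bookkeeping, and the main point to handle with care is confirming that these substrings really are the $\sigma$-images rather than merely initial segments of $\TWord$ of the same length. This is immediate once each $s_n$ is matched against the explicit string listed in the {\it Lemma}. To streamline the identifications I would make explicit the relation $\sigma^{[l]}(0)\sspeq tw(l+1)$, a consequence of $\sigma(tw(l))\sspeq tw(l+1)$ together with $tw(1)\sspeq 0$, so that the tribonacci-word equalities $s_{13}\sspeq tw(5)$, $s_7\sspeq tw(4)$, $s_4\sspeq tw(3)$, $s_2\sspeq tw(2)$ appear as instances of a single formula; and I would note that $\sigma^{[k]}(2)\sspeq \sigma^{[k-1]}(0)$ explains why the image of $2$ at level $k$ is always a genuine tribonacci word. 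With these verifications in place, the homomorphism-plus-self-similarity argument closes all four cases at once.
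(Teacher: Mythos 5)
Your proposal is correct and is essentially the paper's own argument: the paper also establishes $t_4=\sigma(\TWord)=\TWord$ (phrased via $\sigma^{[-1]}(t_4)=\TWord$) and then cascades upward through the identical verifications $\sigma(s_2)=s_4$, $\sigma(\hat s_2)=s_3$, $\sigma(s_1)=s_2$, then $\sigma(s_4)=s_7$, $\sigma(s_3)=s_6$, and finally $\sigma(s_7)=s_{13}$, $\sigma(s_6)=s_{11}$, concluding each time with the self-similarity $\sigma(\TWord)=\TWord$. Your packaging as $t_i=\sigma^{[k]}(\TWord)$ for $k=4,3,2,1$, rather than the paper's step-by-step $t_1=\sigma(t_2)$, $t_2=\sigma(t_3)$, $t_3=\sigma(t_4)$, is only a cosmetic reorganization of the same cascade.
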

\noindent
{\bf Proof}:\pn
For $t_4$ from {\bf D}: The definition of $\sigma^{[-1]}$ shows that $\sigma^{[-1]}(t_4)\sspeq \TWord$. Hence $t_4\sspeq \sigma(\TWord)\sspeq \TWord$. \pn
For $t_3$ from {\bf C}: Because $\sigma(s_2)\sspeq s_4$, $\sigma(\hat s_2)\sspeq s_3 $ and 
$\sigma(s_1) \sspeq s_ 2$ it follows that $t_3\sspeq \sigma(t_4) \sspeq \TWord$.\pn
For $t_2$ from {\bf B}: Because $\sigma(s_4)\sspeq s_7$, $\sigma(s_3)\sspeq s_6 $ and $\sigma(s_2) \sspeq s_ 4$ it follows that $t_2\sspeq \sigma(t_3) \sspeq \TWord$.\pn
For $t_1$ from {\bf A}: Because $\sigma(s_7)\sspeq s_{13}$, $\sigma(s_6)\sspeq s_{11} $ and $\sigma(s_4) \sspeq s_ 7$ it follows that $t_1\sspeq \sigma(t_2)$ \pn
$\sspeq \TWord$.\pn
$\square$\psn
Using eq.\ref{self3} a formula for sequence entry $A(n)\sspeq$\seqnum{A278040}$(n)$ in terms of \dstyle{z(n)\sspdef \sum_{j=0}^n\,t(j)} is derived. This sequence $\{z(j)\}_{j=0}^{\infty}$ is given in \seqnum{A319198}. 
\begin{proposition} \label{Prop10}
\Beq \label{An}
A(n) \sspeq 4\,n\sspp 1 \sspm z(n-1), \ \text{for}\ n\sspin \mathbb N_0,  \ \text{with}\ \ z(-1) \sspeq 0\,.
\Eeq
\end{proposition}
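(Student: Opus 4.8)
The plan is to read off $A(n)$ directly from the self-similar partition of $\TWord$ furnished by eq.~\ref{self3} in {\it Lemma}~\ref{Partitions}, part {\bf C}. That identity writes $\TWord \sspeq \cProd_{j=0}^{\infty}\, s_{\tau(t(j))}$ as a concatenation of blocks indexed by $j \sspin \mathbb N_0$, the $j$-th block being $s_4 \sspeq 0102$, $s_3 \sspeq 010$ or $s_2 \sspeq 01$ according as $t(j)$ equals $0$, $1$ or $2$. The decisive observation is that each of these three blocks contains exactly one letter $1$, and in every case this $1$ occupies inner offset $1$ (offsets counted from $0$): in $s_4 \sspeq 0102$, $s_3 \sspeq 010$ and $s_2 \sspeq 01$ the unique $1$ always stands in second place. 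Hence, counting from $0$ as all sequences have offset $0$, the $j$-th occurrence of the letter $1$ in $\TWord$ is exactly the unique $1$ inside the $j$-th block, so that $A(j)$ equals the starting position of block $j$ plus $1$.

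First I would record the block lengths. Since $\#s_4 \sspeq 4$, $\#s_3 \sspeq 3$, $\#s_2 \sspeq 2$ while $\tau(0) \sspeq 4$, $\tau(1) \sspeq 3$, $\tau(2) \sspeq 2$, the length of block $i$ is $\#s_{\tau(t(i))} \sspeq \tau(t(i)) \sspeq 4 \sspm t(i)$, using the single identity $\tau(v) \sspeq 4 \sspm v$ valid for $v \sspin \{0,\,1,\,2\}$. Summing these lengths gives the starting position of block $j$ as
\[
\sum_{i=0}^{j-1} \bigl(4 \sspm t(i)\bigr) \sspeq 4\,j \sspm \sum_{i=0}^{j-1} t(i) \sspeq 4\,j \sspm z(j-1)\,,
\]
where $z(j-1) \sspeq \sum_{i=0}^{j-1} t(i)$ is exactly the partial-sum sequence of the claim, the convention $z(-1) \sspeq 0$ accounting for the empty sum at $j \sspeq 0$.

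Adding the inner offset $1$ then yields $A(n) \sspeq 4\,n \sspm z(n-1) \sspp 1 \sspeq 4\,n \sspp 1 \sspm z(n-1)$ for all $n \sspin \mathbb N_0$, which is the assertion. The base case $n \sspeq 0$ falls out automatically: block $0$ starts at position $0$, its unique $1$ lies at position $1$, and the formula returns $4\cdot 0 \sspp 1 \sspm z(-1) \sspeq 1 \sspeq A(0)$.

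I expect no real obstacle here; the whole argument rests on two elementary checks. The first is that each building block $s_2$, $s_3$, $s_4$ carries precisely one $1$ at the fixed inner offset $1$, which is immediate by inspection. The second, and the only point requiring a little care, is the bookkeeping of offsets: one must keep the offset-$0$ indexing of $A$ synchronized with the block index $j$ and with the partial sum $z(j-1)$, so that the telescoping identity $\tau(v) \sspeq 4 \sspm v$ turns the cumulative block length precisely into $4\,n \sspm z(n-1)$. Once this alignment is pinned down, the formula follows with no further computation.
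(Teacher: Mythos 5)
Your proposal is correct and is essentially the paper's own proof: both rest on the partition $t_3$ of eq.~\ref{self3} from {\it Lemma}~\ref{Partitions}{\bf C}, and your summation of block lengths $4 \sspm t(i)$ plus the fixed inner offset of the unique $1$ is exactly the telescoped form of the paper's difference relation $\triangle A(k+1) \sspeq 4 \sspm t(k)$ with $A(0) \sspeq 1$. The only difference is presentational: the paper states the distance between consecutive $1$s and then telescopes, while you sum the block lengths directly, which is the same computation.
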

\noindent
{\bf Proof:}\pn 
Define $\triangle A(k+1)\sspdef A(k+1)\sspm A(k)$. Consider the word $t_3$ of eq.~\ref{self3}.
The distances between the $1$s in the pairs $s_4s_3$, $s_3s_4$, $s_4s_2$, $s_2s_4$ and $s_4s_4$ are $4,\,3,\,4,\,2,\,4$. Therefore, the sequence of these distances is $4,\,3,\,4,\,2,\,4,\,3,\,4,\,4,\,3,\,4,\,2,\,...$. Thus, because the $s$-string $t_3$ follows the pattern of $t$, \ie of $\TWord$,    
\Beq \label{triangleA}
\triangle A(k+1)\sspeq 4\sspm t(k)\,, \ \ \text{for} \ \ k\sspeq \rangeinf{0}\,.
\Eeq
Then the telescopic sum produces the assertion, using $A(0)\sspeq 1$.\psn
\Beq
A(n)\sspeq A(0) \sspp \sum_{k=0}^{n-1}\, \triangle A(k+1)\sspeq 1 + 4\,n\sspm z(n-1),\ \ \text{with}\ \ z(-1)\sspeq 0.
\Eeq\pn
\hskip 16.1cm $\square$
\psn
The $B$ numbers \seqnum{A278039}, giving the increasing indices $k$ with $t(k) \sspeq 0$, come in three types: $B0$ numbers form the sequence of increasing indices $k$ of sequence $t$ with $t(k) \sspeq 0\sspeq t(k+1)$. Similarly the $B1$ sequence lists the increasing indices $k$ with $t(k) \sspeq 0,\, t(k+1)\sspeq 1$ and for the $B2$ sequence the indices $k$ are such that $t(k) \sspeq 0,\, t(k+1)\sspeq 2$.\pn
These numbers $B0(n)$, $B1(n)$ and $B2(n)$ are given by \seqnum{A319968}$(n+1)$, \seqnum{A278040}$(n) \sspm 1$, and \seqnum{A278041}$(n) \sspm 1$, respectively.\psn 
Before giving proofs we define the counting sequences $z_A(n)$, $z_B(n)$ and $z_C(n)$ to be the numbers of $A$, $B$ and $C$ numbers not exceeding $n\sspin \mathbb N$, respectively. If these counting functions appear for $n\sspeq -1$ they are set to $0$.\psn
These sequences are given by \seqnum{ A276797}$(n+1)$, \seqnum{A276796}$(n+1)$ and \seqnum{A276798}$(n+1)\sspm 1$ for $n\sspgeq -1$.\pn
Obviously,
\Beq \label{zn}
z(n)\sspeq 1\,z_A(n) \sspp 0\,z_B(n) \sspp 2\,z_C(n)\sspeq z_A(n)\sspp 2\,z_C(n),\ \ \text{for}\ \ n \sspeq -1,\,0,\,1,\,...\, . 
\Eeq 
These counting functions are obtained by partial sums of the corresponding characteristic sequences for the $A$, $B$ and $C$ numbers (or $0$, $1$, and $2$numbers in $t$), called $k_A$, $k_B$ and $k_C$, respectively.
\Beq \label{zXnsum}
z_X(n)\sspeq \sum_{k=0}^n\,k_X(k),\ \ \text{for} \ \ X\sspin \{A,\,B,\,C\}\ .  
\Eeq
The characteristic sequences members $k_A(n)$, $k_B(n)$ and $k_C(n)$ are given in \seqnum{A276794}$(n+1)$,  \seqnum{A276793}$(n+1)$ and \seqnum{A276791}$(n+1)$, for $n\sspin\mathbb N_0$, and they are, in terms of $t$, obviously given by
\Beqarray 
k_A(n)&\sspeq& t(n)\,(2\sspm t(n)), \label{kAn}\\ 
k_B(n)&\sspeq& \frac{1}{2}\,(t(n)\sspm 1)\,(t(n)\sspm 2), \label{kBn}\\
k_C(n)&\sspeq& \frac{1}{2}\,t(n)\,(t(n)\sspm 1). \label{kCn}
\Eeqarray
By definition it is trivial that (note the offset $0$ of the  $A,\,B,\,\,C$ sequences) 
\Beq \label{zXX}
z_X(X(k)) \sspeq k\sspp 1, \ \ \text{for}\ \ X\sspin \{A,\,B,\,C\}\ \ \text{and}\ \ k \sspin\mathbb N\,.
\Eeq
\begin{proposition} \label{Bseq}\psn
\pbn
For $n \sspin \mathbb N_0:$
\Beqarray
{\bf B0)}\  B0(n)&\sspeq& 13\,n \sspp 6 \sspm 2\,[z_A(n-1) \sspp 3\,z_C(n-1)] \sspeq 2\,C(n)\sspm n, \label{B0}\\
{\bf B1)}\ B1(n)&\sspeq& 4\,n\sspm z(n-1)\sspeq 4\,n\sspm [z_A(n-1) \sspp 2\,z_C(n-1)]\sspeq A(n)\sspm 1, \label{B1}\\
{\bf B2)}\  B2(n)&\sspeq& 7\,n\sspp 2\sspm [z_A(n-1) \sspp 3\,z_C(n-1)]\sspeq  \frac{1}{2}\,\left(B0(n)\sspp n\sspm 2\right) \nonumber\\
&\sspeq& C(n)\sspm 1, \label{B2}\\
{\bf B)}\, \ \ B(n)&\sspeq& 2\,n\sspm z_C(n-1)\, . \label{B}
\Eeqarray
For $n\sspeq 0$ empty sums in $z_A$, $z_C$ and $z$ are set to 0.
\end{proposition}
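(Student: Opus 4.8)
The plan is to read all four formulas off the self-similar block decomposition of $\TWord$ and then make them explicit, using Proposition~\ref{Prop10} for $A$ and a telescoping argument for $C$ parallel to the one used there. Since $\sigma(\TWord)=\TWord$ with $\sigma:0\mapsto 01,\,1\mapsto 02,\,2\mapsto 0$, I would write $\TWord$ as the concatenation of the blocks $\sigma(t(i))$, $i\in\mathbb N_0$, where block $i$ equals $01$, $02$ or $0$ according to $t(i)=0,1,2$. Its length is $2-k_C(i)$ by eq.~\ref{kCn}, so block $i$ starts at position $P(i):=\sum_{j=0}^{i-1}(2-k_C(j))=2i-z_C(i-1)$ by eq.~\ref{zXnsum}. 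The three elementary observations I need are: every $0$ of $\TWord$ is the first letter of exactly one block (the second letters being $1$, $2$, or absent); every $1$ occurs only inside a block $01$ and every $2$ only inside a block $02$, so each $1$ and each $2$ is immediately preceded by a $0$; and a pattern $00$ can occur only where a length-one block $0=\sigma(2)$ is followed by the initial $0$ of the next block.

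From the first observation the $n$-th $0$ is the start of block $n$, giving $B(n)=P(n)=2n-z_C(n-1)$, which is formula \textbf{B)}. From the second observation $t(k+1)=1$ forces $t(k)=0$, so the indices $k$ with $t(k)=0,\,t(k+1)=1$ are exactly those of the form $A(m)-1$; hence $B1(n)=A(n)-1$, and substituting $A(n)=4n+1-z(n-1)$ from eq.~\ref{An} together with $z(n-1)=z_A(n-1)+2z_C(n-1)$ (eq.~\ref{zn}) yields the chain in \textbf{B1)}. Likewise $t(k+1)=2$ forces $t(k)=0$, so $B2(n)=C(n)-1$; for the explicit form I would compute $C(n)$ by telescoping exactly as in Proposition~\ref{Prop10}, but now using the partition $t_2$ of eq.~\ref{self2}. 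Each of the blocks $s_7,s_6,s_4$ contains a single $2$ at the same internal offset, so the gap between consecutive $2$s equals the length of the current block, namely $7,6,4$ for $t(k)=0,1,2$; thus $\triangle C(k+1)=7-k_A(k)-3k_C(k)$, and telescoping from $C(0)=3$ gives $C(n)=7n+3-[z_A(n-1)+3z_C(n-1)]$, whence $B2(n)=C(n)-1$ matches \textbf{B2)}.

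For \textbf{B0)} the third observation gives a bijection between the occurrences of $00$ and the $2$s of $\TWord$: the $n$-th $00$ comes from the length-one block sitting at index $i=C(n)$, and its left position is $P(C(n))=2C(n)-z_C(C(n)-1)$. Since exactly the $n$ earlier $2$s lie before position $C(n)$, one has $z_C(C(n)-1)=n$, so $B0(n)=2C(n)-n$, which is the closed identity in \textbf{B0)}; substituting the formula for $C(n)$ from the previous step produces the explicit $13n+6-2[z_A(n-1)+3z_C(n-1)]$. Finally the stated relation $B2(n)=\frac{1}{2}(B0(n)+n-2)$ is immediate from $B0(n)=2C(n)-n$ and $B2(n)=C(n)-1$.

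The routine parts are the telescoping bookkeeping and the substitutions via eqs.~\ref{zn}, \ref{kAn} and \ref{kCn}. The two steps needing genuine care are: justifying that each block of $t_2$ contains exactly one $2$ at a fixed internal offset, so that the $C$-gaps telescope cleanly (the analogue of the distance computation in Proposition~\ref{Prop10}); and verifying the $00$-to-$2$ bijection in full, in particular that no $000$ arises. The latter uses that $\TWord$ has no factor $22$ (equivalently, consecutive $C$-values differ by at least $4$), so a length-one block $0$ is never immediately followed by another length-one block. Those two points are the crux; everything else follows by telescoping and direct substitution.
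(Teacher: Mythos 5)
Your proof is correct, and although it runs on the same machinery as the paper's --- the self-similar partitions of $\TWord$ from Lemma~\ref{Partitions} plus telescoping --- it takes a genuinely different route for \textbf{B0)} (and, mildly, for \textbf{B1)}). For \textbf{B)} and \textbf{B2)} you do exactly what the paper does: read the increments off the $\sigma$-block decomposition $t_4$ of eq.~\ref{self4} and off $t_2$ of eq.~\ref{self2}, then telescope from $B(0)=0$, $C(0)=3$. For \textbf{B1)} you use the shortcut $B1(n)=A(n)-1$ combined with Proposition~\ref{Prop10} and eq.~\ref{zn}; the paper telescopes $01$-distances in $t_3$ first, but explicitly remarks that your direct route works, so this is only a cosmetic difference. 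The real divergence is \textbf{B0)}: the paper introduces the coarsest partition $t_1$ of eq.~\ref{self1} (blocks $s_{13},s_{11},s_7$) and telescopes distances between $00$ patterns, which needs the delicate bookkeeping that $s_7$ contains no $00$ and only forms one with the first letter of the adjacent $s_{13}$; the closed form $B0(n)=2C(n)-n$ then appears only afterwards, as a corollary of the $C$ formula. You instead obtain $B0(n)=2C(n)-n$ first and directly: every $00$ starts at a block boundary whose block must be the length-one block $\sigma(2)$, so the $n$-th $00$ sits at position $P(C(n))=2C(n)-z_C(C(n)-1)=2C(n)-n$, and the explicit formula follows by substituting your $C(n)$. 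In effect you prove $B0(n)=B(C(n))$, anticipating eq.~\ref{BCk} of Proposition~\ref{XYk}, whose proof in the paper is precisely your computation via eqs.~\ref{B} and~\ref{zXX}. Your route buys economy (part A of Lemma~\ref{Partitions} is never needed, and the identity $B0=2C-n$ becomes transparent rather than an afterthought); the paper's route buys uniformity (all four formulas come from one distance-telescoping template) and an explicit $B0$ formula independent of the $C$ formula. One small remark: the $000$ verification you flag as a crux is actually not needed --- the correspondence between starting positions of $00$ factors and indices $i$ with $t(i)=2$ holds regardless of overlaps, since the second letter of any $00$ forces the block containing its first letter to be $\sigma(2)$ --- though your justification via the absence of the factor $22$ is correct and does no harm.
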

\psn
{\bf Proof:}\pn
{\bf B0}: Part 1:\ Define $\triangle B0(k+1)\sspdef B0(k+1)\sspm B0(k)$ and consider the word $t_1$ of eq.~\ref{self1}. The distances between pairs of $00$ in $s_{13}s_{11}$, $s_{11}s_{13}$, $s_{13}s_7$, $s_7s_{13}$ and $s_{13}s_{13}$ are $13,\,11,\,13,\,7,\,13$. Note that $s_7$ has no substring $00$, however because $s_7$ is always followed by $s_{13}$ the last $0$ of $s_7$ and the first of $s_{13}$ build the $00$ pair. Similarly, in the $s_{13}s_7$ case the last $0$ of $s_7$ is counted as a beginning of a $00$ pair. Therefore, the sequence of these distances is $13,\,11,\,13,\,7,\,13,\,11,\,13,\,13,\,11,\,13,\,7,\,...$. Because the $s$-string $t_1$ follows the pattern of $t$ the defect from $13$ is $0,\, -2,\, -6$ if $t(k) = 0,\,1,\,2$, hence     
\Beq
\triangle B0(k+1)\sspeq 13\sspm t(k)\,(t(k)\sspp 1)\,, \ \ \text{for} \ \ k\sspin \mathbb N_0\,.
\Eeq
The telescopic sum gives for $n\sspin \mathbb N$, with $B0(0)\sspeq 6$, 
\Beqarray
B0(n)&\sspeq& B0(0)\sspp \sum_{k=0}^{n-1}\,\triangle\, B0(k+1) \nonumber \\
&\sspeq& 6\sspp 13\,n \sspm 2\,(z_A(n-1)\sspp 3\,z_C(n-1))\,.
\Eeqarray
In the last step eqs.~\ref{zXnsum}, ~\ref{kAn} and ~\ref{kCn} have been used. This proves the first part of {\bf B0}. The proof of part 2 follows later from {\bf B2}.
\psn
{\bf B1}: With $\triangle B1(k+1)\sspdef B1(k+1)\sspm B1(k)$ and $t_3$ of eq.~\ref{self3} one finds for the distances between occurrences of $01$s similar to the above argument
\Beq \label{triangleB1}
\triangle B1(k+1)\sspeq 4\sspm t(k)\,, \ \ \text{for} \ \ k\sspin \mathbb N_0\,.
\Eeq
The telescopic sum gives, for $n\sspin \mathbb N$, with $B1(0) = 0$, 
\Beq 
B1(n)\sspeq 4\,n \sspm z(n-1), \nonumber
\Eeq
the first part of {\bf B1}, which shows, with eq~\ref{An}, also the third one.  The second part uses eq.~\ref{zn}.\psn
Note that $B1(n) = A(n)\sspm 1$ is trivial because $1$ in the tribonacci word $\TWord$ can only come from the substitution $\sigma(0)\sspeq 01$, and $\TWord$ (and $t$) starts with $0$. Therefore, one could directly prove {\bf B1} from eqs.~\ref{An} and ~\ref{zn} without first computing $\triangle B1(k+1)$. 
\psn
{\bf B2}: Because $2$ in $\TWord$ appears only from $\sigma(1)\sspeq 02$, it is clear that $B2(n)\sspeq C(n)\sspm 1$. Now one finds a formula for $C$ by looking first at $\triangle\,C(k+1)\sspdef C(k+1)\sspm C(k)$ using $t_2$ of eq.~\ref{self2}. The distances between consecutive $2$s in the five pairs $s_7s_6$, $s_6s_7$, $s_7s_4$, $s_4s_7$ and $s_7s_7$ is $7,\,6,\, 7,\,4,\,7$, respectively, and 
\Beq \label{triangleC}
\triangle \,C(k+1)\sspeq 7\sspm \frac{1}{2}\,t(k)\,(t(k)\sspp 1)\,, \ \ \text{for} \ \ k\sspin \mathbb N_0\,.
\Eeq
The telescopic sum leads here, using $C(0)\sspeq 3$, $z(n-1)$ from eq.~\ref{zn}  to 
\Beq \label{Cn}
C(n)\sspeq 7\,n\sspp 3 \sspm [z_A(n-1)\sspp 3\,z_C(n-1)]\,, \ \ \text{for} \ \ k\sspin \mathbb N_0\,.
\Eeq
This proves $\bf B2$, and also the second part of $\bf B0$. 
 \psn
{\bf B)}: Here $t_4$ of eq.~\ref{self4} can be used. The differences of $0$s in the five pairs $s_2\hat s_2$, $\hat s_2s_2$, $s_2s_1$, $s_1s_2$ and $s_2s_2$ is $2,\,2,\,2,\,1,\,2$. Thus
\Beq \label{triangleB}
\triangle\,B(k+1)\sspdef B(k+1)\sspm B(k)\sspeq 2\sspm \frac{1}{2}\,t(k)\,(t(k)\sspm 1)\sspeq 2\sspm k_C(n)\,, \ \ \text{for} \ \ k\sspin \mathbb N_0\,. 
\Eeq
In the last expression eq.~\ref{kCn} has been used. By telescoping, using $B(0)\sspeq 0$ and the definition of $z_C(n-1)$ from~\ref{zXnsum} proves the assertion. \hskip 8.5cm $\square$
\pbn
Eqs.~\ref{triangleC} and ~\ref{triangleB} show that $\triangle C(k+1) \sspm \triangle B(k+1) \sspeq 5\sspm t(k)$, for $k\sspin \mathbb N_0$. Telescoping leads to the result, obtained directly from eqs.~\ref{Cn} and ~\ref{B}, with eq.~\ref{zn},
\Beq
C(n)\sspm B(n) \sspeq 5\,n\sspp 3\sspm z(n-1)\,,\ \text{for} \ \ k\sspin \mathbb N_0\,, 
\Eeq
and with $A$ from eq.~\ref{An} this becomes
\Beq \label{ABCn}
C(n)\sspm (A(n)\sspp B(n))\sspeq n\sspp 2\,, \ \ \text{for} \ \ k\sspin \mathbb N_0\,. 
\Eeq 
This equation can be used to eliminate $C$ from the equations. 
\psn
Next the formulae for $z_X$ for $X\sspin \{A,\,B,\,C\}$ are listed, valid for $n\sspeq -1,\,0,\,1,\,... $.\psn
\begin{proposition} ~\label{Prop12}
\Beqarray
z_A(n)&\sspeq& 2\,B(n+1)\sspm A(n+1)\sspp 1\,, \label{zAn}  \\
z_B(n)&\sspeq& A(n+1)\sspm B(n+1)\sspm (n\sspp 2),\,\label{zBn}\\
z_C(n)&\sspeq& 2\,(n\sspp 1)\sspm B(n+1)\,. \label{zCn}
\Eeqarray
\end{proposition}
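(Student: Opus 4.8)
The plan is to derive all three formulas purely algebraically from results already in hand, the key extra ingredient being the fact---recorded in the introduction---that $A$, $B$ and $C$ are disjoint and together exhaust $\mathbb{N}_0$. This partition property supplies the master counting identity
\[
z_A(n) + z_B(n) + z_C(n) = n+1, \qquad \text{for } n \geq 0,
\]
since each of the integers $0,1,\ldots,n$ lies in exactly one of the three sequences; at the boundary $n=-1$ all three counting functions are $0$ by convention, consistent with $(-1)+1=0$. It is this third linear relation among $z_A$, $z_B$, $z_C$ that, combined with eqs.~\ref{An} and \ref{B}, pins down all three expressions.

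First I would establish eq.~\ref{zCn}, which is immediate: eq.~\ref{B} reads $B(n)=2n-z_C(n-1)$, so replacing $n$ by $n+1$ and solving for the counting function gives $z_C(n)=2(n+1)-B(n+1)$ at once. Next, for $z_A$ I would combine three earlier statements. Rearranging eq.~\ref{An} gives $z(n-1)=4n+1-A(n)$, hence $z(n)=4n+5-A(n+1)$ after shifting $n\mapsto n+1$. Equation~\ref{zn} expresses $z(n)=z_A(n)+2z_C(n)$, so $z_A(n)=z(n)-2z_C(n)$; substituting the expression for $z(n)$ just found together with the formula for $z_C(n)$ from the previous step, the $4n$-terms cancel and one is left with $z_A(n)=2B(n+1)-A(n+1)+1$, which is eq.~\ref{zAn}.

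Finally $z_B$ drops out of the master identity, $z_B(n)=(n+1)-z_A(n)-z_C(n)$: inserting the two formulas already proved and simplifying yields $z_B(n)=A(n+1)-B(n+1)-(n+2)$, i.e.\ eq.~\ref{zBn}. I do not anticipate a genuine obstacle here; once the partition identity is invoked the argument is bookkeeping, and all the cancellations are forced. The only point demanding a little care is checking consistency at the lower boundary $n=-1$, where every $z_X$ vanishes: this follows from $A(0)=1$, $B(0)=0$, $C(0)=3$ together with the convention that empty sums are zero. Thus the essential insight is simply the recognition that completeness of the $A$, $B$, $C$ partition furnishes exactly the extra equation needed to invert the already-derived linear relations for $A$ and $B$ into explicit formulas for the counting functions.
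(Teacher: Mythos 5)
Your proposal is correct and takes essentially the same route as the paper's own ``Version 2'' proof of Proposition~\ref{Prop12}: both obtain $z_C$ by shifting eq.~\ref{B}, both invoke the partition identity $z_A(n)+z_B(n)+z_C(n)=n+1$ (eq.~\ref{sumzX}) to read off $z_B$, and both get $z_A$ by linear elimination. Your derivation of $z_A$ from eqs.~\ref{An} and~\ref{zn} is interchangeable with the paper's combination of eqs.~\ref{B1} and~\ref{B2} (the paper itself remarks that {\bf B1} follows trivially from those same two equations), so the difference is only in which previously proved linear relations are combined.
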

\psn
{\bf Proof:} Version 1. The inputs are $z_X(-1)\sspeq 0$, for $X\sspin \{A,\,B,\,C\}$, by definition, and are satisfied due to $B(0)\sspeq 0$ and $A(0)\sspeq 1$. Therefore $z_X(n) \sspeq \sum_{k=0}^n \triangle z_X(k)$, with $z_X(k)\sspdef z_X(k)\sspm z_X(k-1)$. The claimed formula and the known $\triangle A(k+1)$ and $\triangle B(k+1)$ from eqs.~\ref{triangleA} and ~\ref{triangleB}, respectively, produce the results $k_X(k)$ given in eqs.~\ref{kAn} to ~\ref{kCn}. Therefore  $z_X(n)$ coincides with eq. \ref{zXnsum}.\psn
Version 2. Besides eq.~\ref{zn} the trivial formula
\Beq \label{sumzX}
z_A(n)\sspp z_B(n) \sspp z_C(n) \sspeq n\sspp 1  
\Eeq
can be used.\pn
$z_A(n-1)$ is computed from the difference of  $3\,(z_A(n-1)\sspp 2\,z_C(n-1))$ from eq.~\ref{B1} and $2\,(z_A(n-1)\sspp 3\,z_C(n-1))$ from eq.~\ref{B2}, with $C(n)$ from eq.~\ref{ABCn}. This difference leads to the claim eq.~\ref{zAn} with $n\sspto n+1$.\pn
The $z_C(n)$ formula is eq.~\ref{B} with $n\sspto n+1$.\pn
$z_B(n)$ can then be computed from eq.~\ref{sumzX}. \hskip 8,5cm $\square$
\pb
Next all formulae for compositions of the types $X(Y(k)+1)$ and  $X(Y(k))$, for $X,\,Y\sspin \{A,\,B,\,C\}$ and $k\sspin \mathbb N_0$ shall be given. They are of interest in connection with the tribonacci $ABC$ representation given in the preceding section. For this, one needs first the results for the compositions $z(X(k))$. The formulae will be given in terms of $A$ and $B$ (with $C$ eliminated by eq. ~\ref{ABCn}).
\begin{proposition} ~\label{Prop13}
\Beqarray
z(A(k)) &\sspeq& 2\,(A(k)\sspm B(k))\sspm k \sspm 1, \label{zAk} \\
z(B(k)) &\sspeq& -A(k)\sspp 3\, B(k)\sspm k \sspp 1, \label{zBk} \\
z(C(k)) &\sspeq& B(k)\sspp 2\,k \sspp 3. \label{zCk}
\Eeqarray
\end{proposition}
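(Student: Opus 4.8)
The plan is to imitate the telescoping method already used for Propositions~\ref{Prop10} and~\ref{Bseq}, but to telescope over the index $k$ of the sequence $X$ instead of over the positions of $\TWord$. For each $X\sspin\{A,\,B,\,C\}$ I would study the increment $\triangle z(X(k))\sspdef z(X(k+1))\sspm z(X(k))$, which is nothing but $\sum_{j=X(k)+1}^{X(k+1)} t(j)$, the sum of the letters of $\TWord$ lying strictly after the $k$-th occurrence of the digit attached to $X$ up to and including the $(k+1)$-th occurrence.

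First I would read these increments off the self-similar partitions of Lemma~\ref{Partitions}: the word $t_3$ of eq.~\ref{self3} for $A$ (digit $1$), the word $t_2$ of eq.~\ref{self2} for $C$ (digit $2$), and the word $t_4$ of eq.~\ref{self4} for $B$ (digit $0$). Each of these partitions was built so that every block contains exactly one copy of the relevant digit, hence the gap between the $k$-th and $(k+1)$-th occurrence decomposes into the tail of block $k$ after its distinguished digit together with the head of block $k+1$ up to and including its distinguished digit. Summing the letters over these two pieces, and recording the dependence on $t(k)$ through the characteristic functions $k_A,\,k_B,\,k_C$ of eqs.~\ref{kAn}--\ref{kCn}, I expect to obtain
\[
\triangle z(A(k))\sspeq 1\sspp 2\,k_B(k),\qquad \triangle z(C(k))\sspeq 4\sspm k_C(k),\qquad \triangle z(B(k))\sspeq k_B(k)\sspp 2\,k_A(k).
\]
Telescoping from $k\sspeq0$, with the initial values $z(A(0))\sspeq z(1)\sspeq1$, $z(C(0))\sspeq z(3)\sspeq3$ and $z(B(0))\sspeq z(0)\sspeq0$, and using $\sum_{k=0}^{n-1}k_X(k)\sspeq z_X(n-1)$ from eq.~\ref{zXnsum}, then gives
\[
z(A(n))\sspeq 1\sspp n\sspp 2\,z_B(n-1),\quad z(C(n))\sspeq 3\sspp 4\,n\sspm z_C(n-1),\quad z(B(n))\sspeq z_B(n-1)\sspp 2\,z_A(n-1).
\]
Finally I would insert the counting-function expressions of Proposition~\ref{Prop12} shifted by $n\sspto n-1$, namely $z_A(n-1)\sspeq 2\,B(n)\sspm A(n)\sspp1$ and $z_B(n-1)\sspeq A(n)\sspm B(n)\sspm(n+1)$ from eqs.~\ref{zAn} and~\ref{zBn}, together with $z_C(n-1)\sspeq 2\,n\sspm B(n)$ from eq.~\ref{B}. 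All compositions cancel, and elementary algebra reduces the three displays to the claimed formulas~\ref{zAk},~\ref{zBk} and~\ref{zCk}.

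The one genuinely delicate step is the passage from the partition to the increment formulas: one must split each gap correctly into the ``tail of block $k$'' (strictly after its distinguished digit) and the ``head of block $k+1$'' (up to and including its distinguished digit), counting the boundary digits once and only once. This is the same kind of bookkeeping across block boundaries that already needed explicit comment in the proof of {\bf B0}, where a pair $00$ straddles the junction of two blocks. Once the three increment formulas are secured, the telescoping and the substitution from Proposition~\ref{Prop12} are entirely routine.
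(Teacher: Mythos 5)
Your proposal is correct and follows essentially the same route as the paper: both rest on the block partitions of Lemma~\ref{Partitions} (with $t_3$, $t_4$, $t_2$ serving $A$, $B$, $C$ respectively, each block containing the distinguished digit exactly once), on the counting functions of eq.~\ref{zXnsum}, and on the final substitution of Proposition~\ref{Prop12}. The only difference is organizational --- the paper sums the $k$ full blocks directly, weighted by $z_B(k-1)$, $z_A(k-1)$, $z_C(k-1)$, plus the partial last block, whereas you difference first and telescope; your increment formulas, initial values $z(A(0))=1$, $z(B(0))=0$, $z(C(0))=3$, and boundary bookkeeping all check out, so this is the same argument merely rearranged.
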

\noindent
{\bf Proof:} $z(X(k))$ will be found from the self-similarity properties given in eqs.~\ref{self3}, ~\ref{self4} and ~\ref{self2}, for $X\sspeq A,\,B$ and $C$, respectively. These strings $t_3$, $t_4$ and $t_2$ are chosen because the relevant numbers $1,\, 0$ and $2$, respectively, appear precisely once in all $s-$substrings. For $z(X(k))\sspeq \sum_{j=0}^{X(k)}\, t(j)$ one has to sum all the numbers of the first $k$ substrings $s$ but in the last one only the numbers up to the number standing for $X$ are summed. \pn
A) In the $t_3$ substrings $s_4\sspeq 0102$, $s_3\sspeq 010$ and $s_2\sspeq 01$ the number $1$ appears just once. In all three substrings the sum up to the relevant number $1$ (for $A$) is $0\sspp 1\sspeq 1$, so for the last $s$ one has always to add $1$. Because $s_4$, $s_3$ and $s_2$, with sums $3,\, 1$ and $1$, play the role of $0,\,1$ and $2$, respectively, in $t_3$ one obtains $z(A(k))\sspeq 3\,z_B(k-1) +1\,(z_A(k-1)\sspp z_C(k-1)) \sspp 1$. With the identity eq.~\ref{sumzX} this becomes $2\,z_B(k-1)\sspp k\sspp 1$, and with the $z_B$ formula  eq.~\ref{zBn} this leads to the claim eq.~\ref{zAk}.\psn
B) In $t_4$ the sums of the substrings $s_2,\,\hat s_2,\, s_1$ are $1,\,2,\,0$,respectively,  and because all three begin with the relevant number $0$ nothing to be summed for the last $s$. Thus $z(B(k))\sspeq 1\,z_B(k-1) \sspp 2\,z_A(k-1)\sspp 0 \sspp 0$. Using eqs.~\ref{zBn} and ~\ref{zAn} this becomes the claim.\psn
C) In $t_2$ the sums are $4$ for $s_7,\, s_6$ and $3$ for $s_4$. The sums up to the relevant number $2$ are $3$ for each case. Therefore $z(C(k))\sspeq 4\,(z_B(k-1) \sspp z_A(k-1)) \sspp 3\,z_C(k-1)\sspp 3\sspeq z_B(k-1)\sspp z_A(k-1) \sspp 3\,k \sspp 3\sspeq B(k)\sspp 2\,k \sspp 3$, with eqs.~\ref{sumzX}, ~\ref{zBn} and  ~\ref{zAn}. \hskip 3cm $\square$
\pbn
\begin{proposition} \label{XYk}
\Beqarray 
A(A(k)+1) &\sspeq& 2\,(A(k)\sspp B(k))\sspp k\sspp 6\,, \hskip 1cm A(A(k))\sspeq  A(A(k)+1)\sspm 3\,, \label{AAk}\\
A(B(k)+1) &\sspeq& A(k)\sspp B(k) \sspp k\sspp 4\,,     \hskip 1.6cm A(B(k))\sspeq A(B(k)+1)\sspm 4\,, \label{ABk}\\
A(C(k)+1) &\sspeq& 4\,A(k)\sspp 3\,B(k)\sspp 2\,(k\sspp 5)\,,\hskip 0.5cm A(C(k))\sspeq A(C(k)+1)\sspm 2 \, .\label{ACk}
\Eeqarray
\Beqarray
B(A(k)+1) &\sspeq& A(k)\sspp B(k)\sspp k\sspp 3\,, \hskip 1.7cm B(A(k))\sspeq  B(A(k)+1)\sspm 2\,,\label{BAk}\\
B(B(k)+1) &\sspeq& A(k)\sspp 1\,,     \hskip 4.1cm B(B(k))\sspeq B(B(k)+1)\sspm 2\,, \label{BBk}\\
B(C(k)+1) &\sspeq& 2\,(A(k)\sspp B(k))\sspp k \sspp 5\,,\hskip 1.1cm B(C(k))\sspeq B(C(k)+1)\sspm 1 \, .\label{BCk}
\Eeqarray
\Beqarray
C(A(k)+1) &\sspeq& 4\,A(k)\sspp 3\,B(k) \sspp 2\,(k\sspp 6)\,, \hskip .5cm C(A(k))\sspeq  C(A(k)+1)\sspm 6\,,\label{CAk}\\
C(B(k)+1) &\sspeq& 2\,(A(k)\sspp B(k)) \sspp k\sspp 8\,,     \hskip 1.1cm C(B(k))\sspeq C(B(k)+1)\sspm 7\,, \label{CBk}\\
C(C(k)+1) &\sspeq& 7\,A(k)\sspp 6\,B(k)\sspp 4\,(k\sspp 5)\,,\hskip .6cm C(C(k))\sspeq C(C(k)+1)\sspm 4\, .\label{CCk}
\Eeqarray
\end{proposition}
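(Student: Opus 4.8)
The plan is to split the nine lines of Proposition~\ref{XYk} into a ``shifted'' column $X(Y(k)+1)$ and a ``plain'' column $X(Y(k))$, prove the shifted column by reducing each entry to the composed sums $z(Y(k))$ of Proposition~\ref{Prop13}, and then obtain the plain column for free. I would do the plain column first, since it is immediate: because $A$, $B$, $C$ record the positions of $1$, $0$, $2$ in $t$, one has $t(A(k))=1$, $t(B(k))=0$, $t(C(k))=2$. Substituting these three values as $m=Y(k)$ into the one-step increments $\triangle A(m+1)=4-t(m)$, $\triangle B(m+1)=2-\tfrac12 t(m)(t(m)-1)$ and $\triangle C(m+1)=7-\tfrac12 t(m)(t(m)+1)$ of eqs.~\ref{triangleA},~\ref{triangleB},~\ref{triangleC} reproduces exactly the subtracted constants $3,4,2$ (for $X=A$), $2,2,1$ (for $X=B$) and $6,7,4$ (for $X=C$) appearing in the second half of each line. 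Hence every $X(Y(k))$ equals $X(Y(k)+1)$ minus the stated constant, and only the shifted column remains.

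For the $A$-row I would put $n=Y(k)+1$ in $A(n)=4n+1-z(n-1)$ (eq.~\ref{An}), giving $A(Y(k)+1)=4\,Y(k)+5-z(Y(k))$, and then insert the three expressions for $z(Y(k))$ from Proposition~\ref{Prop13} (using $C(k)=A(k)+B(k)+k+2$ from eq.~\ref{ABCn} to absorb the $4\,C(k)$ term in the $Y=C$ case). Routine simplification yields eqs.~\ref{AAk},~\ref{ABk},~\ref{ACk}. The $C$-row will then require no independent argument: since $C(Y(k)+1)=A(Y(k)+1)+B(Y(k)+1)+Y(k)+3$ by eq.~\ref{ABCn}, eqs.~\ref{CAk},~\ref{CBk},~\ref{CCk} follow by adding the $A$-row to the $B$-row once the latter is known.

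The $B$-row is the heart of the matter. Here I would use $B(n)=2n-z_C(n-1)$ (eq.~\ref{B}), so $B(Y(k)+1)=2\,Y(k)+2-z_C(Y(k))$, and the single new ingredient needed is the composed count $z_C(Y(k))$. I would recover it from the two counting identities $z(Y(k))=z_A(Y(k))+2\,z_C(Y(k))$ (eq.~\ref{zn}) and $z_A(Y(k))+z_B(Y(k))+z_C(Y(k))=Y(k)+1$ (eq.~\ref{sumzX}), closed in each case by the one trivial value $z_Y(Y(k))=k+1$ from eq.~\ref{zXX}: for $Y=A$ take $z_A(A(k))=k+1$, for $Y=C$ take $z_C(C(k))=k+1$, and for $Y=B$ take $z_B(B(k))=k+1$. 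Eliminating the remaining two unknowns gives $z_C(Y(k))$ in closed form (e.g.\ $z_C(A(k))=A(k)-B(k)-k-1$ and $z_C(B(k))=-A(k)+2\,B(k)+1$), whence eqs.~\ref{BAk},~\ref{BBk},~\ref{BCk}.

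The main obstacle is precisely the extraction of $z_C(Y(k))$ in the $B$-row, and within it the sub-case $Y=B$: there $z_C(B(k))$ is not itself one of the trivial complementary values, so it must be solved out of the $2\times 2$ linear system above using $z(B(k))$ from eq.~\ref{zBk} together with $z_B(B(k))=k+1$. Everything else in the proof is substitution of the master formulae~\ref{An},~\ref{B},~\ref{ABCn}, the increment formulae, and arithmetic; no new self-similarity argument beyond those already used for Propositions~\ref{Prop10}--\ref{Prop13} should be required.
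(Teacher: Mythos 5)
Your proposal is correct; I checked the arithmetic in all nine shifted formulae and in the subtracted constants, and everything matches (in particular your intermediate values $z_C(A(k))\sspeq A(k)\sspm B(k)\sspm k\sspm 1$ and $z_C(B(k))\sspeq 2B(k)\sspm A(k)\sspp 1$ agree with the paper's final proposition on $z_X(Y(k))$). Your skeleton coincides with the paper's: the plain column via the increments of eqs.~\ref{triangleA},~\ref{triangleB},~\ref{triangleC} evaluated at $t(Y(k))\sspin\{1,0,2\}$, the $A$-row by substituting $z(Y(k))$ from Proposition~\ref{Prop13} into eq.~\ref{An}, and the $C$-row from eq.~\ref{ABCn} once the $A$- and $B$-rows are known. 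Where you genuinely diverge is the $B$-row, which you correctly identify as the heart of the matter. The paper does \emph{not} compute $z_C(Y(k))$ up front; instead it proves the unshifted identities $B(A(k))\sspeq C(k)\sspm 1\sspeq B2(k)$ and $B(B(k))\sspeq A(k)\sspm 1$ indirectly, by applying the counting function $z_B$ to both sides and verifying $z_B(C(k))\sspeq A(k)\sspp 1$ and $z_B(A(k))\sspeq B(k)\sspp 1$ (eq.~\ref{zBAk}), and it gets $B(C(k))\sspeq 2\,C(k)\sspm k\sspeq B0(k)$ directly from eq.~\ref{B} with $z_C(C(k)\sspm 1)\sspeq k$. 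That route depends on recognizing both sides as $B$-numbers (the identifications $C(k)\sspm 1\sspeq B2(k)$ and $2C(k)\sspm k \sspeq B0(k)$ from Proposition~\ref{Bseq} supply this), since $z_B$ alone is not injective; it buys, as by-products, the structural links to the $B0$, $B2$ sequences (completing part 2 of {\bf B0} in Proposition~\ref{Bseq}) and the identity~\ref{zBAk}. Your route --- solving the $2\times 2$ linear system from eqs.~\ref{zn} and~\ref{sumzX}, closed by $z_Y(Y(k))\sspeq k\sspp 1$ of eq.~\ref{zXX} and $z(Y(k))$ from Proposition~\ref{Prop13}, then substituting into $B(Y(k)+1)\sspeq 2\,Y(k)\sspp 2\sspm z_C(Y(k))$ --- is more uniform and mechanical, sidesteps the injectivity subtlety entirely, and treats all three sub-cases by one scheme, at the cost of not exhibiting those extra identities.
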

\noindent
{\bf Proof:}\pn
The two versions are related by $\triangle X(n+1)\sspeq X(n+1)\sspm X(n)$ given in eqs.~\ref{triangleA},~\ref{triangleB} and~\ref{triangleC}, for $X\sspin \{A,\,B,\,C\}$, respectively, and $n$ replaced by $Y(k)$ with $Y\sspin \{A,\,B,\,C\}$.
For $C(n)$ eq.~\ref{ABCn} is always used.\pn
A) This follows from $A(n+1)$, given in eq.~\ref{An} with $n\sspto Y(k)$, $z(Y(k))$ from eqs.~\ref{zAk},~\ref{zBk}, and~\ref{zCk}.\psn
B) One proves that $B(A(k))\sspeq A(k)\sspp B(k) \sspp k \sspp 1$ from which $B(A(k)+1)$ follows. With eq.~\ref{ABCn} this means that one has to prove 
\Beq \label{BAkC}
B(A(k))\sspeqmust C(k)\sspm 1\sspeq B2(k). \nonumber
\Eeq
The second equality is \ref{B2}.
After applying $z_B$ on both sides, using eq.~\ref{zXX}, this is equivalent to
\Beq \label{zBCk}
A(k)\sspp 1\sspeqmust z_B(C(k)-1)) \sspeq z_B(C(k)). \nonumber
\Eeq
The second equality is trivial. This assertion is now proved. From eqs.~\ref{zBn},\,~\ref{An} and ~\ref{zCn} follows $z_B(n)\sspeq n\sspp 1 \sspm z(n)\sspp z_C(n)$.
Hence $z_B(C(k))\sspeq C(k)\sspp 1 - z(C(k))\sspp (k\sspp 1)$, with eq.~\ref{zXX}. This equals $C(k)\sspm k \sspm 1 \sspm B(k)$ from eq.~\ref{zCk}, and replacing $C(k)$ by eq. ~\ref{ABCn} gives $A(k) \sspp 1$.
\psn
One proves $B(B(k)) \sspeq A(k)\sspm 1$ or, after application of $z_B$ with eq.~\ref{zBn} on both sides, $B(k)\sspp 1 \sspeqmust z_B(A(k)\sspm 1)\sspeq z_B(A(k))$, where the second equality is trivial. But from eqs.~\ref{sumzX} and~\ref{zXX} follows $z_B(A(k))\sspeq A(k)\sspp 1 \sspm (k\sspp 1) \sspm z_C(A(k))$. Applying eq.~\ref{zCn} for $z_C(A(k))$, and the just proven eq.~\ref{BAk} for $B(A(k)+1)$ shows that
\Beq \label{zBAk}
z_B(A(k)) \sspeq B(k)\sspp 1.
\Eeq
The $B(C(k))$ claim can be written in terms of $C$ from eqs.~\ref{ABCn} and ~\ref{B0} as 
\Beq \label{BCk}
 B(C(k))\sspeqmust 2\,C(k)\sspm k \sspeq B0(k)\,, \nonumber
\Eeq
where eq.~\ref{B0} has been repeated.
\pn
Indeed, eqs.~\ref{B} and~\ref{zXX} imply for $B(C(k))\sspeq 2\,C(k)\sspm z_C(C(k)-1)\sspeq 2\,C(k)\sspm (z_C(C(k)) \sspm 1)\sspeq 2\,C(k)\sspm k$. The second equality is trivial.
\psn
C) These claims follow with $C(n+1)$ from eq.~\ref{ABCn} after replacement $n\sspto Y(k)$, and the already proved formulae for $A(Y(k)+1)$ and $B(Y(k)+1)$.
\pn
\hskip 16cm $\square$
\pbn
The collection of the results for $z_X(Y(k))$ is, for $k\sspin \mathbb N_0$: \psn
\begin{proposition} 
\Beqarray
z_A(A(k))&\sspeq& k\sspp 1, \nonumber\\
z_A(B(k))&\sspeq& A(k) \sspm B(k) \sspm (k\sspp 1)\sspeq  z_C(A(k)),\nonumber \\
z_A(C(k))&\sspeq& B(k)\sspp 1.\label{zAY}\\ 
&&\nonumber \\
z_B(A(k))&\sspeq& B(k)\sspp 1\sspeq z_A(C(k)), \nonumber\\
z_B(B(k))&\sspeq& k\sspp 1,\nonumber \\
z_B(C(k))&\sspeq& A(k)\sspp 1.\label{zBY}\\
&& \nonumber \\
z_C(A(k))&\sspeq& A(k) \sspm B(k)\sspm (k\sspp 1)\sspeq z_A(B(k)), \nonumber\\
z_C(B(k)&\sspeq& 2\,B(k)  \sspm A(k) \sspp 1,\nonumber \\
z_C(C(k))&\sspeq& k\sspp 1.\label{zBY}
\Eeqarray
\end{proposition}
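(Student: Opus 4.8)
The three diagonal entries $z_A(A(k)) = z_B(B(k)) = z_C(C(k)) = k+1$ are exactly equation~\ref{zXX}, so nothing is required there. For the six off-diagonal entries the plan is to fix $Y \in \{A, B, C\}$ and treat the triple $\big(z_A(Y(k)),\, z_B(Y(k)),\, z_C(Y(k))\big)$ as the unknown of a $3\times 3$ linear system built from three facts, all evaluated at the single argument $n = Y(k)$: (i) the diagonal value $z_Y(Y(k)) = k+1$ from equation~\ref{zXX}; (ii) the completeness identity $z_A + z_B + z_C = Y(k)+1$ from equation~\ref{sumzX}; and (iii) the weighted relation $z_A + 2\,z_C = z(Y(k))$ from equation~\ref{zn}, whose right-hand side $z(Y(k))$ is delivered by Proposition~\ref{Prop13}, namely by equations~\ref{zAk},~\ref{zBk} and~\ref{zCk} for $Y = A, B, C$ respectively.

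First I would record that this system is nonsingular. Relations (ii) and (iii) contribute the coefficient rows $(1,1,1)$ and $(1,0,2)$, while (i) contributes $(1,0,0)$, $(0,1,0)$ or $(0,0,1)$ according as $Y = A$, $B$ or $C$; the three determinants are $2$, $-1$ and $-1$, hence nonzero, so the triple is uniquely pinned down and can be read off by elimination. Concretely, for $Y = A$ relation (iii) gives $z_C(A(k))$ at once (since $z_A(A(k)) = k+1$ is already known) and then (ii) gives $z_B(A(k))$; for $Y = B$, subtracting (ii) from (iii) isolates $z_C(B(k))$ and then (ii) returns $z_A(B(k))$; for $Y = C$, relation (iii) gives $z_A(C(k))$ and then (ii) gives $z_B(C(k))$.

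What remains is purely mechanical: insert the Proposition~\ref{Prop13} values for $z(Y(k))$ and, wherever a $C(k)$ appears, replace it by $A(k)+B(k)+k+2$ using equation~\ref{ABCn}, so that each answer collapses to a linear combination of $A(k)$, $B(k)$ and $k$. This reproduces the nine stated formulas and simultaneously exposes the two asserted coincidences $z_A(B(k)) = z_C(A(k))$ and $z_B(A(k)) = z_A(C(k))$ as literal equalities of the resulting expressions. I anticipate no real obstacle: the analytic work has already been spent in establishing Proposition~\ref{Prop13} (and the self-similarity Lemma~\ref{Partitions} behind it), so only careful bookkeeping and the one-line nonsingularity check remain. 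As a global sanity test one can confirm that, for each $Y$, the three outputs sum to $Y(k)+1$, exactly as equation~\ref{sumzX} demands.
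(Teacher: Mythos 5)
Your proposal is correct, but it takes a genuinely different route from the paper's own proof. The paper handles the diagonal entries by eq.~\ref{zXX} exactly as you do, but for the six off-diagonal entries it simply substitutes $n = Y(k)$ into the closed forms of Proposition~\ref{Prop12} (e.g.\ $z_A(n) = 2B(n+1) - A(n+1) + 1$) and then evaluates the shifted compositions $A(Y(k)+1)$ and $B(Y(k)+1)$ using Proposition~\ref{XYk}; the entries are thus immediate corollaries of the composition formulas. You instead pin down each column $\bigl(z_A(Y(k)),\,z_B(Y(k)),\,z_C(Y(k))\bigr)$ as the unique solution of a nonsingular $3\times 3$ system built from the diagonal identity (eq.~\ref{zXX}), completeness (eq.~\ref{sumzX}), and the weighted relation $z = z_A + 2z_C$ (eq.~\ref{zn}) with right-hand side supplied by Proposition~\ref{Prop13}. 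I verified the elimination in all three cases and it does reproduce the nine formulas: for instance, for $Y=C$ one gets $z_A(C(k)) = z(C(k)) - 2(k+1) = B(k)+1$ and then $z_B(C(k)) = C(k) - B(k) - k - 1 = A(k)+1$ via eq.~\ref{ABCn}, and your determinant values $2$, $-1$, $-1$ are right. What your route buys is a shorter dependency chain: it bypasses Proposition~\ref{XYk} entirely (a heavy ingredient whose own proof already consumed Proposition~\ref{Prop13}), and the system viewpoint delivers the coincidences $z_A(B(k)) = z_C(A(k))$ and $z_B(A(k)) = z_A(C(k))$ together with the row-sum sanity check essentially for free. What the paper's route buys is uniformity and brevity at this point in the text: having already proved the composition formulas, it needs only a one-line substitution and no nonsingularity discussion, and it makes the section's results interlock.
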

\noindent
{\bf Proof}: \psn
That $z_X(X(k))\sspeq k\sspp 1$ has been noted already in eq.~\ref{zXX}.\psn
The other claims follow from the $z_X(n)$ results in {\it Proposition}~\ref{Prop12} after replacing $n$ by $Y(k)\sspneq X(k)$, and application of the formulae from {\it Proposition}~\ref{XYk}.  \hskip 8cm  $\square$
\pbn
As above mentioned many of the formulae of this section appear in \cite{Carlitz} and \cite{Barcucci} with the above mentioned translation between their sequences $a,\, b,\,$ and $c$ to our $B,\,A,$ and $C$. For example, {\it Theorem 13} of \cite{Carlitz}, p. 57, for the nine twofold iterations (in our notation $X(Y(k))$ of {\it Proposition}~\ref{XYk}) can be checked. 
\pbn
\pbn
{\bf Acknowledgment:} Thanks go to {\sl Neil Sloane} for an e-mail motivating this study. The author is grateful to an unknown referee of the first version who gave plenty of comments, suggestions and some questions concerning the proof of the theorem which led to this revision. 
\pbn 

\pbn 
\pbn
\hrule \psn
2010 Mathematics Subject Classification: Primary 11Y55; Secondary 32H50.
\psn 
{\em Keywords:}  Tribonacci numbers, tribonacci constant, tribonacci word,  tribonacci tree, tribonacci ABC-sequences, tribonacci ABC-tree. \psn
\hrule
\psn 
Concerned with OEIS \cite{OEIS} sequences: \seqnum{A0000073},\, \seqnum{A000201},\, \seqnum{A001622},\,  \seqnum{A001590},\, \seqnum{A001950},\, \seqnum{A003144},\, \seqnum{A003145},\, \seqnum{A003146},\, \seqnum{A005614},\, \seqnum{A058265},\,
 \seqnum{A080843},\, \seqnum{A092782}\, \seqnum{A158919},\, \seqnum{A189921},\, \seqnum{A276791},\, \seqnum{A276793},\, \seqnum{A276794},\, \seqnum{A276796},\,  \seqnum{A276797},\, \seqnum{A276798},\, \seqnum{A278038},\, \seqnum{A278039},\, \seqnum{A278040},\,  \seqnum{A278041},\, \seqnum{A278044},\, \seqnum{A316174},\, \seqnum{A316711},\, \seqnum{A316712},\,  \seqnum{A316713},\, \seqnum{A316714},\, \seqnum{A316715},\, \seqnum{A316716},\,  \seqnum{A316717},\, \seqnum{A317206},\, 
 \seqnum{A319195},\,  \seqnum{A319198},\, \seqnum{A319968}.
\psn
\hrule\psn
\vfill
\eject
\begin{landscape}
\begin{center}
{\large {\bf Table 1: Sequences $\bf t,\,A,\,B,\,C$, {\bf for}  $\bf n\sspeq \range{0}{79}$}} 
\end {center}
\begin{center}  
\begin{tabular}{|c|r|r|r|r|r|r|r|r|r|r|r|r|r|r|r|r|r|r|r|r|}\hline
&& && && && &&  && && && && &&\\
$\bf{n}$& $0$ & $1$ & $2$ & $3$ & $4$ & $5$ & $6$ & $7$ & $8$ & $9$ & $10$ & $11$ & $12$ & $13$ & $14$ & $15$ & $16$ & $17$ & $18$ & $19$\\ 
\hline
$\bf t$& $0$ & $1$ & $0$ & $2$ & $0$ & $1$ & $0$ & $0$ & $1$ & $0$ & $2$ & $0$ & $1$ & $0$ & $1$ & $0$ & $2$ & $0$ & $1$ & $0$\\
\hline
$\bf A$& $1$ & $5$ & $8$ & $12$ & $14$ & $18$ & $21$ & $25$ & $29$ & $32$ & $36$ & $38$ & $42$ & $45$ & $49$ & $52$ & $56$ & $58$ & $62$ & $65$\\
\hline
$\bf B$& $0$ & $2$ & $4$ & $6$ & $7$ & $9$ & $11$ & $13$ & $15$ & $17$ & $19$ & $20$ & $22$ & $24$ & $26$ & $28$ & $30$ & $31$ & $33$ & $35$\\
\hline
$\bf C$& $3$ & $10$ & $16$ & $23$ & $27$ & $34$ & $40$ & $47$ & $54$ & $60$ & $67$ & $71$ & $78$ & $84$ & $91$ & $97$ & $104$ & $108$ & $115$ & $121$\\
\hline
\hline
$\bf{n}$& $20$ & $21$ & $22$ & $23$ & $24$ & $25$ & $26$ & $27$ & $28$ & $29$ & $30$ & $31$ & $32$ & $33$ & $34$ & $35$ & $36$ & $37$ & $38$ & $39$\\ 
\hline
$\bf t$& $0$ & $1$ & $0$ & $2$ & $0$ & $1$ & $0$ & $2$ & $0$ & $1$ & $0$ & $0$ & $1$ & $0$ & $2$ & $0$ & $1$ & $0$ & $1$ & $0$\\
\hline
$\bf A$& $69$ & $73$ & $76$ & $80$ & $82$ & $86$ & $89$ & $93$ & $95$ & $99$ & $102$ & $106$ & $110$ & $113$ & $117$ & $119$ & $123$ & $126$ & $130$ & $133$\\
\hline
$\bf B$& $37$ & $39$ & $41$ & $43$ & $44$ & $46$ & $48$ & $50$ & $51$ & $53$ & $55$ & $57$ & $59$ & $61$ & $63$ & $64$ & $66$ & $68$ & $70$ & $72$\\
\hline
$\bf C$& $128$ & $135$ & $141$ & $148$ & $152$ & $159$ & $165$ & $172$ & $176$ & $183$ & $189$ & $196$ & $203$ & $209$ & $216$ & $220$ & $227$ & $233$ & $240$ & $246$\\
\hline
\hline
$\bf{n}$& $40$ & $41$ & $42$ & $43$ & $44$ & $45$ & $46$ & $47$ & $48$ & $49$ & $50$ & $51$ & $52$ & $53$ & $54$ & $55$ & $56$ & $57$ & $58$ & $59$\\ 
\hline
$\bf t$& $2$ & $0$ & $1$ & $0$ & $0$ & $1$ & $0$ & $2$ & $0$ & $1$ & $0$ & $0$ & $1$ & $0$ & $2$ & $0$ & $1$ & $0$ & $1$ & $0$\\
\hline
$\bf A$& $137$ & $139$ & $143$ & $146$ & $150$ & $154$ & $157$ & $161$ & $163$ & $167$ & $170$ & $174$ & $178$ & $181$ & $185$ & $187$ & $191$ & $194$ & $198$ & $201$\\
\hline
$\bf B$& $74$ & $75$ & $77$ & $79$ & $81$ & $83$ & $85$ & $87$ & $88$ & $90$ & $92$ & $94$ & $96$ & $98$ & $100$ & $101$ & $103$ & $105$ & $107$ & $109$\\
\hline
$\bf C$& $253$ & $257$ & $264$ & $270$ & $277$ & $284$ & $290$ & $297$ & $301$ & $308$ & $314$ & $321$ & $328$ & $334$ & $341$ & $345$ & $352$ & $358$ & $365$ & $371$\\
\hline
\hline
$\bf{n}$& $60$ & $61$ & $62$ & $63$ & $64$ & $65$ & $66$ & $67$ & $68$ & $69$ & $70$ & $71$ & $72$ & $73$ & $74$ & $75$ & $76$ & $77$ & $78$ & $79$\\ 
\hline
$\bf t$& $2$ & $0$ & $1$ & $0$ & $0$ & $1$ & $0$ & $2$ & $0$ & $1$ & $0$ & $2$ & $0$ & $1$ & $0$ & $0$ & $1$ & $0$ & $2$ & $0$\\
\hline
$\bf A$& $205$ & $207$ & $211$ & $214$ & $218$ & $222$ & $225$ & $229$ & $231$ & $235$ & $238$ & $242$ & $244$ & $248$ & $251$ & $255$ & $259$ & $262$ & $266$ & $268$\\
\hline
$\bf B$& $111$ & $112$ & $114$ & $116$ & $118$ & $120$ & $122$ & $124$ & $125$ & $127$ & $129$ & $131$ & $132$ & $134$ & $136$ & $138$ & $140$ & $142$ & $144$ & $145$\\
\hline
$\bf C$& $378$ & $382$ & $389$ & $395$ & $402$ & $409$ & $415$ & $422$ & $426$ & $433$ & $439$ & $446$ & $450$ & $457$ & $463$ & $470$ & $477$ & $483$ & $490$ & $494$\\
\hline
\end{tabular}
\end{center}
\end{landscape}
\vfill
\eject
\begin{center}
{\large {\bf Table 2: $\bf ZT(N)$, for $\bf N \sspeq \range{1}{100}$}} 
\end {center}
\begin{center}  
\begin{tabular}{|c|l||c|l||c|l||c|l||c|l|}\hline
&& && && && &\\
$\bf{N}$& $\bf ZT(N)$ &$\bf{N}$& $\bf ZT(N)$& $\bf{N}$& $\bf ZT(N)$ &$\bf{N}$& $\bf ZT(N)$&$\bf{N}$& $\bf ZT(N)$\\ 
&& && && && &\\ \hline\hline
\bf 1&  1    &\bf 21& 11001 &\bf 41& 110100 &\bf 61& 1010100  &\bf 81 & 10000000 \\
\hline
\bf 2&  10    &\bf 22& 11010 &\bf 42& 110101  &\bf 62& 1010101  &\bf 82 & 10000001 \\
\hline
\bf 3&  11     &\bf 23& 11011 &\bf 43& 110110 &\bf 63& 1010110   &\bf 83 & 10000010 \\
\hline
\bf 4& 100   &\bf 24& 100000 &\bf 44& 1000000 &\bf 64& 1011000 &\bf 84 &  10000011 \\
\hline
\bf 5& 101   &\bf 25& 100001 &\bf 45& 1000001 &\bf 65& 1011001 &\bf 85 & 10000100 \\
\hline
\bf 6& 110    &\bf 26& 100010  &\bf 46& 1000010&\bf 66& 1011010 &\bf 86 & 10000101 \\
\hline
\bf 7& 1000 &\bf 27& 100011 &\bf 47& 1000011 &\bf 67& 1011011 &\bf 87 & 10000110 \\
\hline
\bf 8& 1001  &\bf 28& 100100 &\bf 48& 1000100 &\bf 68& 1100000 &\bf 88 & 10001000 \\
\hline
\bf 9& 1010  &\bf 29& 100101  &\bf 49& 1000101 &\bf 69& 1100001 &\bf 89 & 10001001 \\
\hline
\bf 10& 1011   &\bf 30& 100110 &\bf 50& 1000110 &\bf 70& 1100010 &\bf 90 & 10001010 \\
\hline
\bf 11& 1100  &\bf 31& 101000 &\bf 51& 1001000 &\bf 71& 1100011 &\bf 91 & 10001011 \\
\hline
\bf 12& 1101    &\bf 32& 101001 &\bf 52& 1001001 &\bf 72& 1100100 &\bf 92 & 10001100 \\
\hline
\bf 13& 10000 &\bf 33& 101010 &\bf 53& 1001010&\bf 73& 1100101 &\bf 93 & 10001101 \\
\hline
\bf 14& 10001 &\bf 34& 101011 &\bf 54& 1001011 &\bf 74& 1100110 &\bf 94 & 10010000 \\
\hline
\bf 15& 10010  &\bf 35& 101100 &\bf 55& 1001100 &\bf 75& 1101000 &\bf 95 & 10010001 \\
\hline
\bf 16& 10011  &\bf 36& 101101 &\bf 56& 1001101 &\bf 76& 1101001 &\bf 96 & 10010010 \\
\hline
\bf 17& 10100  &\bf 37& 110000 &\bf 57& 1010000  &\bf 77& 1101010 &\bf 97 & 10010011 \\
\hline
\bf 18& 10101  &\bf 38& 110001&\bf 58& 1010001 &\bf 78& 1101011  &\bf 98 & 10010100 \\
\hline
\bf 19& 10110  &\bf 39& 110010 &\bf 59& 1010010 &\bf 79& 1101100 &\bf 99 & 10010101 \\
\hline
\bf 20& 11000  &\bf 40& 110011 &\bf 60& 1010011 &\bf 80& 1101101 &\bf 100 & 10010110 \\
\hline  
\hline
\end{tabular}
\end{center}
\vfill
\eject
\begin{center}
{\large {\bf Table 3: $\bf ABC(N)$, for $\bf N \sspeq \range{1}{100}$}} 
\end {center}
\begin{center}  
\begin{tabular}{|c|l||c|l||c|l||c|l||c|l|}\hline
&& && && && &\\
$\bf{N}$& $\bf ABC(N)$ &$\bf{N}$& $\bf ABC(N)$& $\bf{N}$& $\bf ABC(N)$ &$\bf{N}$& $\bf ABC(N)$&$\bf{N}$& $\bf ABC(N)$\\ 
&& && && && &\\ \hline\hline
\bf 1&  10    &\bf 21& 1020 &\bf 41& 00120 &\bf 61& 001110  &\bf 81 & 000000010 \\
\hline
\bf 2& 010    &\bf 22& 0120 &\bf 42& 1120 &\bf 62& 11110  &\bf 82 & 10000010 \\
\hline
\bf 3& 20     &\bf 23& 220 &\bf 43& 0220 &\bf 63& 02110   &\bf 83 & 01000010 \\
\hline
\bf 4& 0010   &\bf 24& 0000010 &\bf 44& 00000010 &\bf 64& 000210 &\bf 84 & 2000010 \\
\hline
\bf 5&  110   &\bf 25& 100010 &\bf 45& 1000010 &\bf 65& 10210 &\bf 85 & 00100010 \\
\hline
\bf 6& 020    &\bf 26& 010010 &\bf 46& 0100010 &\bf 66& 01210 &\bf 86 & 1100010 \\
\hline
\bf 7&  00010 &\bf 27& 20010  &\bf 47& 200010 &\bf 67& 2210 &\bf 87 & 0200010\\
\hline
\bf 8&  1010  &\bf 28& 001010 &\bf 48& 0010010 &\bf 68& 0000020 &\bf 88 & 00010010\\
\hline
\bf 9&  0110  &\bf 29& 11010 &\bf 49& 110010 &\bf 69& 100020 &\bf 89 & 1010010 \\
\hline
\bf 10& 210   &\bf 30& 02010 &\bf 50& 020010 &\bf 70& 010020 &\bf 90 & 0110010 \\
\hline
\bf 11& 0020  &\bf 31& 000110 &\bf 51& 0001010 &\bf 71& 20020 &\bf 91 & 210010 \\
\hline
\bf 12& 120    &\bf 32& 10110 &\bf 52& 101010 &\bf 72& 001020 &\bf 92 & 0020010 \\
\hline
\bf 13& 000010 &\bf 33& 01110 &\bf 53& 011010 &\bf 73& 11020 &\bf 93 & 120010 \\
\hline
\bf 14&  10010 &\bf 34& 2110 &\bf 54& 21010 &\bf 74& 02020 &\bf 94 & 00001010 \\
\hline
\bf 15& 01010  &\bf 35& 00210 &\bf 55& 002010 &\bf 75& 000120 &\bf 95 & 1001010 \\
\hline
\bf 16&  2010  &\bf 36& 1210 &\bf 56& 12010 &\bf 76& 10120 &\bf 96 & 0101010 \\
\hline
\bf 17& 00110  &\bf 37& 000020 &\bf 57& 0000110 &\bf 77& 01120 &\bf 97 & 201010 \\
\hline
\bf 18& 1110   &\bf 38& 10020 &\bf 58& 100110 &\bf 78& 2120  &\bf 98 & 0011010 \\
\hline
\bf 19& 0210   &\bf 39& 01020 &\bf 59& 010110 &\bf 79& 00220 &\bf 99 & 111010\\
\hline
\bf 20& 00020  &\bf 40& 2020 &\bf 60& 20110 &\bf 80& 1220 &\bf 100 & 021010 \\
\hline  
\hline
\hline
\end{tabular}
\end{center}
\noindent
Here $0, 1$ and $2$ stand for $B, A$ and $C$, respectively. \Eg $ABC(6)\sspeq 020$ to be read as $BCB \sspeq B(C(B(0)))\sspeq (6)_{ABC}$.


\begin{thebibliography}{9}
\bibliographystyle{plain}
\bibitem{Barcucci} Elena Barcucci, Luc Bélanger, and Srecko Brlek, On Tribonacci Sequences, Fib. Q., 42 (2004), 314-320.
\bibitem{Carlitz} L. Carlitz, Richard Scoville, and V. E. Hoggatt, Jr., Fibonacci Representations of Higher Orders, {\em The Fibonacci Quarterly} 10 (1972) 43--69. \url{http://www.fq.math.ca/Scanned/10-1/carlitz3-a.pdf}
\bibitem{WLang} Wolfdieter Lang, The Wythoff and the Zeckendorf representations of numbers are equivalent, in G. E. Bergum et al. (eds.) Application of Fibonacci numbers vol. 6, Kluwer, Dordrecht, 1996, pp. 319-337. A scanned copy with corrections is \url{https://oeis.org/A317208/a317208.pdf} (see also \seqnum{A189921}).
\bibitem{OEIS} The On-Line Encyclopedia of Integer Sequences (2010), published electronically at \url{http://oeis.org}.
\bibitem{Weisstein1} Eric Weisstein's World of Mathematics, Tribonacci Numbers, \pn
\url{http://mathworld.wolfram.com/TribonacciNumber.html}. 
\bibitem{Wiki1} Wikipedia,Generalizations of Fibonacci numbers ,\pn
 \url{https://en.wikipedia.org/wiki/Generalizations_of_Fibonacci_numbers}.
\bibitem{Wiki2} Wikipedia, Beatty Sequence, \url{https://en.wikipedia.org/wiki/Beatty_sequence}
\end{thebibliography}
\end{document}